\documentclass{amsart}

\newcommand{\Q}{\mathbb{Q}}
\newcommand{\C}{\mathbb{C}}

\newcommand{\R}{\mathbb{R}}
\newcommand{\N}{\mathbb{N}}


\newcommand{\dom}{\operatorname{dom}}
\newcommand{\ran}{\operatorname{ran}}

\newcommand{\norm}[1]{\left\| #1 \right\|}


\newcommand{\unit}{\mathbf{1}}
\newcommand{\ratpoly}{\mathfrak{p}}
\newcommand{\Ind}{\operatorname{Ind}}



\theoremstyle{theorem}
\newtheorem{theorem}{Theorem}[section]
\newtheorem{lemma}[theorem]{Lemma}

\newtheorem{corollary}[theorem]{Corollary}

\newtheorem{proposition}[theorem]{Proposition}

\theoremstyle{definition}
\newtheorem{definition}[theorem]{Definition}

\numberwithin{equation}{section}

\begin{document}
\title{Evaluative presentations}
\author{Timothy H. McNicholl}
\address{Department of Mathematics\\
Iowa State University\\
Ames, Iowa 50011}

\email{mcnichol@iastate.edu}

\begin{abstract}
We study presentations of $C^*(X)$ that are evaluative over a presentation of $X$ in that 
$(f,p) \mapsto f(p)$ is computable.  We prove existence-uniqueness theorems for such presentations.
We use our methods to prove an effective Banach-Stone Theorem for unital commutative $C^*$ algebras.
We also apply our results to the computable categoricity of $C^*$ algebras and compact Polish spaces.
\end{abstract}
\maketitle

\section{Introduction}\label{sec:intro}

The study of computable presentations of 
metric structures was initiated by Melnikov in \cite{Melnikov.2013}.  
The general goal of this line of inquiry is to extend the framework of computable structure
theory from countable algebraic structures to uncountable metric structures.  
A number of results have been obtained for computable presentations of metric spaces
(see e.g. \cite{Melnikov.Nies.2013}, \cite{Melnikov.2013}) and Banach spaces (see 
e.g. \cite{Melnikov.2013}, \cite{Melnikov.Ng.2014}, \cite{McNicholl.2017}, 
\cite{Clanin.McNicholl.Stull.2019}, \cite{Brown.McNicholl.2020}, 
\cite{Brown.McNicholl.Melnikov.2020}, \cite{Baz.H-Trainor.Melnikov.2021+}).  Recently, A. Fox initiated the study of 
computable presentations of $C^*$ algebras \cite{Fox.2022+}.   

Here, we focus on computable presentations of spaces of the form $C^*(X)$ (with $\C$ as the 
field of scalars).  
Informally speaking, a computable presentation of such a space defines a 
dense subset of the space with respect to which one can compute the 
norm, addition, multiplication, scalar multiplication, and involution.  
However, a computable presentation of $C^*(X)$ may not provide for the computation of 
the evaluation map $(f,p) \mapsto f(p)$.  
We are thus led to consider comnputable presentations of $C^*(X)$ that are \emph{evaluative} 
over a computable presentation $X^\#$ in the sense that 
the evaluation map can be computed with respect to these presentation.  
A formal definition of this concept will be given in Section \ref{sec:prelim}.  

Building on the recent work of Burton, Eagle, Fox et. al., we prove the following 
theorems about evaluative presentations \cite{BEFGHMMT.2024+}.

\begin{theorem}\label{thm:main.1}
Suppose $X$ is a compact Polish space. 
If $C^*(X)^\#$ is a computable presentation, then, up to computable homeomorphism,
there is a unique presentation of $X$ over which $C^*(X)^\#$ is evaluative.
\end{theorem}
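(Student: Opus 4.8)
The plan is to get existence from computable Gelfand duality and uniqueness from the (effective content of the) fact that a continuous bijection out of a compact space is a homeomorphism, realized through the evaluation embedding of $X$ into a product of disks.

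\emph{Existence.} By the computable Gelfand duality of Burton--Eagle--Fox et al.\ \cite{BEFGHMMT.2024+}, from $C^*(X)^\#$ one extracts a computable presentation $\operatorname{Spec}(C^*(X))^\#$ of the maximal ideal space, realized in the standard fashion as a co-c.e.\ closed subset of a product of disks metrizing the weak-$*$ topology, over which the Gelfand transform --- and hence the map $(f,\phi)\mapsto\phi(f)$ --- is computable. Since $X$ is compact Hausdorff, the canonical map $p\mapsto(f\mapsto f(p))$ is a homeomorphism of $X$ onto $\operatorname{Spec}(C^*(X))$, so transporting $\operatorname{Spec}(C^*(X))^\#$ along it gives a presentation of $X$ over which $C^*(X)^\#$ is evaluative.

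\emph{Uniqueness, setup.} Let $X_1^\#$ and $X_2^\#$ be presentations of $X$ over which $C^*(X)^\#$ is evaluative, and let $(f_n)_n$ enumerate the special points of $C^*(X)^\#$, which we may take to lie in the unit ball. Put $P=\prod_n\overline{\D}$ with the metric $\rho((z_n),(w_n))=\sum_n 2^{-n}|z_n-w_n|$, a computably compact computable metric space, and define $E_i\colon X_i\to P$ by $E_i(p)=(f_n(p))_n$ for $i\in\{1,2\}$. Evaluativeness of $C^*(X)^\#$ over $X_i^\#$ makes each coordinate map $p\mapsto f_n(p)$ computable uniformly in $n$, so truncating the defining sum of $\rho$ at a suitable index shows that $E_i\colon X_i^\#\to P$ is a computable map. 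Since $(f_n)_n$ is dense in $C^*(X)=C(X)$ it separates points, so $E_i$ is injective; and since $X_i\cong X$ is compact, every character of $C(X_i)$ is point evaluation, whence $\ran(E_i)=\{(\phi(f_n))_n:\phi\in\operatorname{Spec}(C^*(X))\}=:S$ is independent of $i$. Moreover $S$ is co-c.e.\ closed in $P$ (membership is cut out by the countably many norm-, linearity-, multiplicativity- and unitality-constraints on $(z_n)$, each a closed condition verified using that the norm of $C^*(X)^\#$ is computable), hence $S$ is computably compact; this is the computable-Gelfand-duality copy of $\operatorname{Spec}(C^*(X))^\#$.

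\emph{The crux.} It remains to show that each $E_i$ is a \emph{computable homeomorphism} of $X_i^\#$ onto $S$, i.e.\ that $E_i^{-1}\colon S\to X_i^\#$ is computable; granting this, $E_2^{-1}\circ E_1\colon X_1^\#\to X_2^\#$ is the required computable homeomorphism and the theorem follows. Classically $E_i^{-1}$ is continuous because $X_i$ is compact and $S$ Hausdorff, and effectivizing precisely this is where the compactness of $X$ must be spent. I expect to handle it either via a general lemma that a computable continuous bijection onto a computably compact computable metric space has a computable inverse, or --- equivalently here --- by first proving that any presentation of $X$ over which $C^*(X)^\#$ is evaluative is itself computably compact: for any $g$ built from the $f_n$ one has $\sup_{p\in X_i}|g(p)|=\|g\|_{C^*(X)}$ computable, and from this together with the computability of $\rho\circ(E_i\times E_i)$ one can effectively produce finite $\varepsilon$-nets of special points. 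This effective "compact bijection is a homeomorphism" step is the main obstacle; the remainder is routine bookkeeping with moduli of continuity and the cited Gelfand duality.
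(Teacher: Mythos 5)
Your existence argument is the same as the paper's: both delegate to the effective Gelfand duality of \cite{BEFGHMMT.2024+}. The issue is the uniqueness half, where you have correctly located, but not closed, the gap. The step you call the crux --- computability of $E_i^{-1}\colon S\to X_i^\#$ --- is exactly where all the content of Theorems \ref{thm:main.1} and \ref{thm:main.2} lives, and neither of your two proposed ways of handling it works as stated. The ``general lemma'' you invoke has the compactness hypothesis on the wrong side: the effective version of ``a continuous bijection from a compact space onto a Hausdorff space is a homeomorphism'' requires the \emph{domain} to be computably compact, and here the domain is $X_i^\#$, which is precisely what is not known (that it must be computably compact is Theorem \ref{thm:main.2}, i.e.\ part of what is being proved). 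Knowing only that the image $S$ is computably compact gives you no way to verify an inverse modulus of continuity; moreover, if such a lemma held, computable compactness would descend along every computable bijection and Theorem \ref{thm:main.2} would be a triviality, so you should not expect to be able to cite it.

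Your fallback --- first prove that every presentation over which $C^*(X)^\#$ is evaluative is computably compact, by producing $\varepsilon$-nets from the computability of $\sup_p|g(p)|=\norm{g}$ --- is the right goal, but the sketch is circular. Evaluativeness does let you find special points of $X_i^\#$ whose $E_i$-images form an $\varepsilon$-net of $S$ with respect to $\rho$, but a net for the pulled-back pseudometric $\rho(E_i(\cdot),E_i(\cdot))$ is a $d^i$-net only after you effectively relate $\rho\circ(E_i\times E_i)$ to $d^i$, and that inverse-modulus computation is the same problem as computing $E_i^{-1}$. Similarly, the natural route of showing that the distance functions $\vec{d^i}_{q}$ are computable vectors of $C^*(X)^\#$ requires computing $\sup_p|\rho(p)-d^i(q,p)|$, which already presupposes computable compactness of $X_i^\#$. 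The paper breaks this circle with an outside ingredient: Theorem \ref{thm:pres.X} produces, from $C^*(X)^\#$ alone, an auxiliary \emph{computably compact} presentation $X^+$ whose distance functions $\vec{d'}_{q_n}$ are computable vectors of $C^*(X)^\#$; evaluativeness makes each $\widehat{p_{i,n}}$ a computable functional of $C^*(X)^\#$, so $d'(p_{i,n},q_m)=\widehat{p_{i,n}}(\vec{d'}_{q_m})$ is computable, the special points of $X_i^\#$ become computable points of $X^+$, and Theorem \ref{thm:eval.2} then yields computable compactness of $X_i^\#$, computability of the $\vec{d^i}_{p_{i,n}}$, and finally the computable homeomorphism between any two evaluative presentations. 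Without an ingredient of this kind (or a correct proof of your crux lemma with compactness on the domain side), your uniqueness argument does not go through. A minor additional point: your co-c.e.\ description of $S$ needs a $C^*(X)^\#$-name of $\unit_{C^*(X)}$ to express the unitality constraint; this is supplied, non-uniformly, by Theorem \ref{thm:comp.unit}.
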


\begin{theorem}\label{thm:main.2}
 If $X^\#$ is a computable presentation of a compact Polish space $X$, and if there is a computable
 presentation of $C^*(X)$ that is evaluative over $X^\#$, then $X^\#$ is computably compact.
\end{theorem}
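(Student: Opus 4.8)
The plan is to prove the statement by producing, directly from the given computable presentation $C^*(X)^\#$, an auxiliary presentation $Y^\#$ of $X$ which is manifestly computably compact and over which $C^*(X)^\#$ is evaluative, and then to transport computable compactness back to $X^\#$ via Theorem~\ref{thm:main.1}. By the uniqueness clause of Theorem~\ref{thm:main.1}, any two presentations of $X$ over which $C^*(X)^\#$ is evaluative are computably homeomorphic, so $X^\#$ and $Y^\#$ will be computably homeomorphic; and computable compactness is preserved under computable surjections (a $2^{-n}$-net of special points of the domain maps, via a computable modulus of uniform continuity, to a set that $2^{-n+1}$-covers the range, which one then snaps to nearby special points of the range), so $X^\#$ is computably compact once $Y^\#$ is. The task thus reduces to building $Y^\#$.

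For the construction, let $(F_k)_{k\in\N}$ enumerate the special points of $C^*(X)^\#$; since they are dense in $C^*(X)$ they separate the points of $X$, and the norms $\|F_k\|$ are computable uniformly in $k$. Consider the evaluation map $e\colon X \to \prod_{k}\overline{D}(0,\|F_k\|)$, $e(q)=(F_k(q))_k$. It is continuous and injective, hence --- as $X$ is compact --- a homeomorphism onto its image. The target is a countable product of closed disks of uniformly computable radii, taken with a summable-weight product metric, and is therefore computably compact, uniformly. The image $e(X)$ is closed, and in fact \emph{effectively} closed: a tuple $(z_k)$ lies in $e(X)$ precisely when the assignment $F_k\mapsto z_k$ respects (to within arbitrary tolerance) the $*$-algebra operations, the unit, and the norm of $C^*(X)$ --- i.e.\ $|P(\bar z)|\le\|P(\bar F)\|$ for every formal rational $*$-polynomial $P$ in finitely many of the $F_k$, together with the normalization forcing the resulting functional to be unital --- and each such constraint is a co-c.e.\ condition on $(z_k)$ because the relevant norms are computable reals and the left-hand sides depend computably--continuously on $(z_k)$. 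Equip $e(X)$ with the presentation $Y^\#$ whose distinguished dense sequence is $(e(x_j))_j$, where $(x_j)$ is the distinguished sequence of $X^\#$; its distances are computable (each coordinate $F_k(x_j)$ is computable because $C^*(X)^\#$ is evaluative over $X^\#$, and the series defining the product metric converges geometrically), so $Y^\#$ is a computable presentation of $X$, and it is computably compact since $e(X)$ is an effectively closed subset of a computably compact space. Finally $C^*(X)^\#$ is evaluative over $Y^\#$: evaluating a special point $F$ of $C^*(X)^\#$ at a special point $e(x_j)$ of $Y^\#$ just computes the corresponding rational $*$-polynomial in the coordinates of $e(x_j)$, which are part of the presentation data of $Y^\#$, and a computable modulus of uniform continuity for $F$ (available since $X$ is compact) upgrades this to computability on all computable points.

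Combining the two parts finishes the proof. I expect the main obstacle to be the verification that $e(X)$ is effectively closed --- pinning down the exact co-c.e.\ system of inequalities that carves the Gelfand spectrum out of $\prod_{k}\overline{D}(0,\|F_k\|)$ and checking that the stated conditions are both necessary and sufficient; this is an effective rendering of the Gelfand--Naimark correspondence. (If the machinery developed for Theorem~\ref{thm:main.1} already yields computable compactness of the canonical presentation of the spectrum, this step is absorbed and Theorem~\ref{thm:main.2} becomes essentially a corollary of Theorem~\ref{thm:main.1}.) A secondary point requiring care is that $Y^\#$, built with the product metric rather than the original metric of $X^\#$, is still a presentation of $X$ in the required sense; this is immediate since $e$ is a homeomorphism and $e(X)$ is complete, and Theorem~\ref{thm:main.1} speaks of presentations only up to computable homeomorphism.
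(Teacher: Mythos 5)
Your proposal is correct in substance but takes a genuinely different route from the paper. The paper disposes of Theorem \ref{thm:main.2} as an immediate consequence of Theorem \ref{thm:eval.2}: evaluativeness makes each $\widehat{p_n}$ a computable functional of $C^*(X)^\#$; the cited effective Gelfand duality (Theorem \ref{thm:pres.X}, from \cite{BEFGHMMT.2024+}) supplies a computably compact presentation $X^+$ whose distance functions are computable vectors of $C^*(X)^\#$; and computable compactness is transferred to $X^\#$ by showing the identity map between $X^\#$ and $X^+$ is computable. You instead rebuild the computably compact presentation by hand: an effective Gelfand transform $e(q)=(F_k(q))_k$ into a computable product of disks, with $e(X)$ carved out by the co-c.e.\ system $|P(\bar z)|\le\norm{P(\bar F)}$ (over rational $*$-polynomials $P$) plus a unitality constraint, seeded with the dense sequence $(e(x_j))_j$, which is computable precisely because $C^*(X)^\#$ is evaluative over $X^\#$; you then invoke the uniqueness clause of Theorem \ref{thm:main.1} and transfer compactness along the resulting computable homeomorphism. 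Your route is more self-contained and gives an explicit picture of the spectral presentation as an effectively closed subset of a computable product; indeed, being allowed to use the special points of $X^\#$ makes your construction lighter than the full Theorem \ref{thm:pres.X}. The cost is that you essentially re-prove the effective Gelfand--Naimark step that the paper simply cites, so the argument is much longer than the paper's.

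Two soft spots, both repairable. First, the zero tuple satisfies all of your norm inequalities, so the unitality constraint is genuinely needed, and to render it co-c.e.\ you need a $C^*(X)^\#$-name of $\unit_{C^*(X)}$; this is exactly Theorem \ref{thm:comp.unit}, which you should cite explicitly --- it is the same non-uniform ingredient the paper flags in Section \ref{sec:unif}. Second, your justification of evaluativeness over $Y^\#$ via ``a computable modulus of uniform continuity for $F$, available since $X$ is compact'' is circular as stated: compactness of $X$ gives a modulus but not a computable one relative to $X^\#$ (that is what computable compactness would provide, i.e., the very thing being proved). No modulus is needed, however: each coordinate $z_k$ is computable from any $Y^\#$-name of a point, since $F_k$ is Lipschitz with computable constant for the weighted product metric, so every rational vector $P(\bar F)$ evaluates computably at arbitrary points of $Y^\#$ as $P(\bar z)$, and general $f$ follows from the sup-norm bound $|f(q)-\rho(q)|\le\norm{f-\rho}$. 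With these repairs the argument goes through.
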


\begin{theorem}\label{thm:main.3}
If $X^\#$ is a computably compact presentation of a Polish space, then, up to computable isometric 
isomorphism, there is a unique computable presentation of $C^*(X)$ that is evaluative over $X^\#$. 
\end{theorem}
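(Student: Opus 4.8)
The plan is to handle existence and uniqueness separately, with one tool doing most of the work in both: a computable function on a computably compact space has a computable modulus of uniform continuity, and a computably compact presentation $X^\#$ supplies, uniformly in $n$, a finite list of ideal points whose $2^{-n}$-balls cover $X$.

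\emph{Existence.} Let $(x_i)_i$ be the ideal points of $X^\#$. Take as special points of the desired presentation $C^*(X)^\#$ the formal $\Q(i)$-$*$-algebra terms built from the constant $1$ and the distance functions $d(\cdot,x_i)$, with addition, multiplication, scalar multiplication, and involution performed syntactically; these operations are then trivially computable. Each term $f$ denotes a genuine element of $C^*(X)$, and $(f,p)\mapsto f(p)$ is computable since it only amounts to evaluating a polynomial in the reals $d(p,x_i)$, which are computable from an $X^\#$-name of $p$; thus the presentation is evaluative over $X^\#$. The norm $\|f\| = \max_{x\in X}|f(x)|$ is computable uniformly in $f$, because $f$ is a computable function on the computably compact space $X^\#$ and the maximum of a computable real-valued function over a computably compact space is a computable real. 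Finally, the terms form a unital $*$-subalgebra of $C^*(X)$ that is closed under conjugation (the generators are real-valued) and separates points of $X$ (if $x\ne y$, pick $x_i$ with $d(x,x_i)<d(x,y)/2$, so $d(x,x_i)<d(y,x_i)$), hence is norm-dense by Stone--Weierstrass. So $C^*(X)^\#$ is a computable presentation of $C^*(X)$ that is evaluative over $X^\#$.

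\emph{Uniqueness.} Let $A^\#$ and $B^\#$ be two such presentations, with special points $(a_i)_i$ and $(b_j)_j$. It suffices to show that each $a_i$ is, uniformly in $i$, a computable point of $B^\#$, and symmetrically; the identity map on $C^*(X)$ is then computable in both directions, hence a computable isometric isomorphism. Fix $i$ and $k$. Since $A^\#$ is evaluative over $X^\#$, the map $p\mapsto a_i(p)$ is a computable function on the computably compact $X^\#$, uniformly in $i$, so we may compute a modulus of uniform continuity for $a_i$; choose $\delta$ with $d(x,y)\le\delta\Rightarrow|a_i(x)-a_i(y)|\le 2^{-k-3}$. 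Using computable compactness, compute a finite $\delta$-net $p_1,\dots,p_m$ of ideal points, and compute $a_i(p_1),\dots,a_i(p_m)$ to within $2^{-k-3}$. Now search through $j=0,1,2,\dots$ and accept $b_j$ as soon as we can confirm both $\omega_{b_j}(\delta)<2^{-k-2}$ and $|b_j(p_\ell)-a_i(p_\ell)|<2^{-k-2}$ for all $\ell\le m$, where $\omega_{b_j}$ (the modulus of continuity of $b_j$) is computable uniformly in $j$ by the same theorem applied through the evaluation map of $B^\#$, and where ``confirm'' denotes the usual semidecision of a strict inequality between computable reals. A three-term estimate, bounding $|b_j(x)-a_i(x)|$ at an arbitrary $x$ by routing through the nearest net point, shows any accepted $b_j$ satisfies $\|b_j-a_i\|<2^{-k}$; and the search terminates because the $b_j$ are dense in $C^*(X)$, so some $b_j$ lies close enough to $a_i$ to pass both tests comfortably. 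This produces, uniformly in $i$, a fast Cauchy sequence of $B^\#$-special points converging to $a_i$; by symmetry we are done.

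The main obstacle is the uniqueness direction, and specifically recognizing that computable compactness of the \emph{shared} base space $X^\#$ is exactly what is needed to reconstruct an element of $C^*(X)$ from finitely much evaluation data: it simultaneously yields the finite nets and, via the effective-uniform-continuity theorem applied through each presentation's evaluation map, the moduli of continuity that control a function between net points. Once this is in place the uniqueness argument is a routine search; the only care required is with the uniformities and with replacing exact comparisons of computable reals by semidecidable strict inequalities.
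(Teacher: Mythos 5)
Your proof is correct, and its computational core is the same as the paper's --- evaluativeness over a shared computably compact $X^\#$ is exactly what makes sup-norms computable --- but it is organized quite differently. The paper factors everything through the presentation of $C^*(X)$ \emph{induced} by $X^\#$ (special points: the unit and the distance functions $\vec{d}_{p_n}$): existence is Theorem \ref{thm:pres.c*} (citing Fox's computability of the induced presentation) together with Lemma \ref{lm:eval.1}, and uniqueness is obtained structurally: any presentation evaluative over $X^\#$ computes the functionals $\widehat{p_n}$, hence by Theorem \ref{thm:eval.2} the vectors $\vec{d}_{p_n}$, hence by Proposition \ref{prop:induced.comp} it is computably isometrically isomorphic to the induced presentation via the identity, so any two such presentations are computably isometrically isomorphic to each other. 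You instead re-prove computability of (a variant of) the induced presentation directly for existence, and for uniqueness you compare two evaluative presentations head-on, computing each special point of one as a point of the other by a net-and-modulus search. Both routes work; the paper's factorization buys reusability (the same lemmas also drive Theorems \ref{thm:main.1} and \ref{thm:main.2} and Corollary \ref{cor:not.unif}), while your direct comparison is self-contained and does not need the induced presentation as an intermediary.

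Two remarks on your uniqueness step. First, a simplification: since both presentations are evaluative over the same computably compact $X^\#$, the map $x \mapsto |a_i(x) - b_j(x)|$ is a computable function on $X^\#$ uniformly in $i,j$, so $\|a_i - b_j\| = \max_{x \in X} |a_i(x) - b_j(x)|$ is already a computable real uniformly in $i,j$; a search for $b_j$ with $\|a_i - b_j\| < 2^{-k}$ then replaces the entire net/modulus routing. (This is precisely the device the paper uses inside Theorem \ref{thm:eval.2} and Proposition \ref{prop:induced.comp}.) Second, a caveat about your test as written: ``confirm $\omega_{b_j}(\delta) < 2^{-k-2}$'' is not obviously semidecidable, because $\omega_{b_j}(\delta)$ is a supremum over the closed set $\{(x,y) : d(x,y) \le \delta\}$, which need not be a computably compact subset of $X \times X$, so this supremum need not be a computable real. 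The standard repair is to semidecide the open-cover condition ``for all $(x,y) \in X \times X$, either $d(x,y) > \delta$ or $|b_j(x) - b_j(y)| < 2^{-k-2}$'' over the computably compact product $X \times X$ (or simply to avoid the issue via the norm computation above); with that repair your acceptance test is sound, the search terminates by density, and the argument stands.
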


From these results, and the methods used to prove them, we obtain a number of consequences.  
In particular, we show that $C^*(X)$ is computably categorical if and only if 
any two computably compact presentations of $X$ are computably homeomorphic.  
We also prove an effective version of the Banach-Stone Theorem for commutative 
unital $C^*$ algebras.  

The paper is organized as follows.  We summarize required background from analysis and computability in 
Section \ref{sec:back}.  Section \ref{sec:prelim} presents preliminary results about evaluative presentations.  We then prove the main theorems in Section \ref{sec:proofs.main}. 
We consider the uniformity of our results in Section \ref{sec:unif}.  
Section \ref{sec:appl} presents applications of our results.  

\section{Background}\label{sec:back}

We begin with some basic notation and terminology from functional analysis.  

When $A$ is a unital $C^*$ algebra,   
let $\unit_A$ denote the unit of $A$.

Suppose $T : C^*(X) \rightarrow C^*(Y)$ is an isometric isomorphism. 
It follows from the Banach-Stone Theorem that there is a unique homeomorphism 
$\psi : Y \rightarrow X$ so that $T(f) = f \circ \psi$ for all $f \in C^*(X)$.
We call $\psi$ the \emph{spatial realization} of $T$.

We assume the reader is familiar with the concepts, methods, and terminology of 
computable analysis as expounded in \cite{Brattka.Hertling.2021}.  We also assume
knowledge of the rudiments of computability theory as presented in \cite{Cooper.2004}.
For the basic computability 
theory of operator algebras, we refer the reader to \cite{Fox.2022+} and \cite{BEFGHMMT.2024+}.  
For a treatment of computably compact Polish spaces, we refer the reader to 
\cite{Downey.Melnikov.2023}; see also \cite{Brattka.Hertling.2021}.

Suppose $B$ and $B'$ are rational open balls of a presented metric space $X^\#$.  
Let $r,r'$ be rational numbers and let $p.p'$ be distinguished points of $X^\#$ so that 
$B = B(p; r)$ and $B' = B(p'; r')$.  We say that $B$ is \emph{formally included} in $B'$
if $d(p,p') + r < r'$.  For the sake of simplifying exposition, we are deliberately obscuring 
the fact that the formal inclusion relation is a relation on \emph{codes} for rational open balls
rather than on the open rational balls themselves.  If $X^\#$ is computable, then 
the formal inclusion relation is c.e.

We now adapt some standard terminology from computable structure theory to the setting 
of $C^*$ algebras.  Fix a $C^*$-algebra $A$.  
The \emph{degree of categoricity} of $A$ is the least powerful Turing degree 
that computes an isometric isomorphism between any two computable presentations of $A$.  
$A$ is \emph{computably categorical} if its degree of categoricity is $\mathbf{0}$.
The \emph{computable dimension} of $A$ is the number of computable presentations of $A$
up to computable isometric isomorphism.

If $d$ is a metric on $X$, then for all $p \in X$ let 
$\vec{d}_p(q) = d(p,q)$.
The following is essentially proved in \cite{BEFGHMMT.2024+}.

\begin{theorem}\label{thm:pres.X}
Suppose $X$ is a compact Polish space and 
$C^*(X)^\#$ is a computable presentation.  Then, there is a computably 
compact presentation $X^\# = (X, d, (p_n)_{n \in \N})$ of $X$ so that the 
following hold.
\begin{enumerate}
	\item $\widehat{p_n}$ is a computable functional of $C^*(X)^\#$ uniformly in $n$.
	
	\item $\vec{d}_{p_n}$ is a computable vector of $C^*(X)^\#$ uniformly in $n$.
\end{enumerate}
\end{theorem}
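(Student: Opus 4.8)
The plan is to extract a computably compact presentation of $X$ directly from the algebraic data of $C^*(X)^\#$ by realizing $X$ as (a computable copy of) the maximal ideal space, equipped with the metric induced from the Gelfand transform. Concretely, I would first fix an enumeration $(f_n)_{n\in\N}$ of the distinguished vectors of $C^*(X)^\#$, and recall that by the work of Burton, Eagle, Fox et al.\ \cite{BEFGHMMT.2024+}, from a computable presentation of a unital commutative $C^*$ algebra one can compute a dense set of characters together with enough information about their values on the $f_n$ to reconstruct the Gelfand topology. The characters correspond to points of $X$: evaluation at $p$ sends $f \mapsto f(p)$. The candidate metric is $d(p,q) = \sum_n 2^{-n} \min(1, |f_n(p) - f_n(q)|)$, scaled or truncated so it is a bona fide bounded metric compatible with the topology of $X$ (here one uses that the $f_n$ are dense in $C^*(X)$ and $X$ is compact, so they separate points and generate the topology). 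The distinguished points $(p_n)$ are then chosen as a dense sequence of characters supplied by the presentation.

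The key steps, in order: (1) invoke \cite{BEFGHMMT.2024+} to obtain a computable dense sequence of characters $\chi_0, \chi_1, \dots$ of $C^*(X)^\#$, uniformly computing the complex numbers $\chi_i(f_n)$ to any desired precision; (2) define $p_n$ to be the point of $X$ corresponding to $\chi_n$, and verify that $d(p_i, p_j)$, as given by the series above, is a computable real uniformly in $i,j$ — this is immediate since each summand is computable and the tail is geometrically bounded; (3) check that $(X,d,(p_n))$ is a correct presentation of $X$, i.e.\ that $d$ is topologically equivalent to the original metric on the compact space $X$ — this follows from point separation plus compactness; (4) establish computable compactness: produce, uniformly in $k$, a finite list of rational open balls of radius $2^{-k}$ covering $X$, which amounts to finding finitely many $p_i$ that are $2^{-k}$-dense; this in turn follows from total boundedness of $X$ together with the ability to semidecide, via the character data, when a finite collection of balls fails to cover, using an effective compactness / finite-subcover argument inside the $C^*$ algebra (e.g.\ a partition-of-unity or Stone–Weierstrass style certificate that the balls cover); (5) verify clause (1): $\widehat{p_n}(f) = \chi_n(f)$ is exactly the evaluation of the character, which is computable uniformly in $n$ by step (1); (6) verify clause (2): $\vec d_{p_n}$ is the function $q \mapsto d(p_n,q)$, which lies in $C^*(X)$ and must be exhibited as a computable vector — I would approximate it in $C^*(X)^\#$ by finite rational combinations of the $f_m$ (using that the partial sums $\sum_{m\le N} 2^{-m}\min(1,|f_m(p_n)-f_m(\cdot)|)$ converge uniformly to $\vec d_{p_n}$ and that $\min(1,|z|)$ applied to elements is computably approximable by the functional calculus in the commutative $C^*$ algebra), obtaining rational Cauchy names uniformly in $n$.

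The main obstacle I anticipate is step (4), computable compactness, together with the technical heart of step (6): namely, turning the abstract function $\vec d_{p_n}$ — which is built from absolute values and minima, operations not directly available as $*$-polynomials — into a genuine computable vector of $C^*(X)^\#$. This requires a computable version of the continuous functional calculus for the self-adjoint elements $\Re(f_m - \chi_m(f_m)\unit)$ etc., i.e.\ showing that $|g|$ and $\min(1,g)$ for self-adjoint $g$ can be norm-approximated by polynomials in $g$ with rational coefficients that can be found effectively (uniform polynomial approximation of $t\mapsto|t|$ and $t\mapsto\min(1,t)$ on the spectrum, which is contained in a known bounded interval). Once that computable functional calculus is in hand, both the metric computations and the representation of $\vec d_{p_n}$ as a computable vector go through, and computable compactness follows by combining total boundedness with the semidecidability of "these finitely many formal balls cover $X$," which one certifies by exhibiting, inside the algebra, a function that is bounded below by a positive rational and supported on the union of the balls. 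I would present steps (1)–(3) and (5) briskly, citing \cite{BEFGHMMT.2024+} and \cite{Downey.Melnikov.2023} for the compactness toolkit, and devote the bulk of the argument to the functional-calculus lemma underlying (4) and (6).
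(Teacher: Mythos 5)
Your plan is essentially the paper's own approach: the paper gives no independent argument but attributes the theorem to the effective Gelfand duality of \cite{BEFGHMMT.2024+}, and your sketch imports exactly that crux (a computable dense sequence of characters with uniformly computable values) from the same source, then fills in the expected details — the weighted-sum metric from the Gelfand transform, effective functional calculus to realize $\vec{d}_{p_n}$ as computable vectors (using the computable unit, which is the known non-uniform ingredient), and certification of finite covers inside the algebra for computable compactness. This matches the intended route, so no substantive divergence or gap to report.
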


As we shall show later, the two conditions in the conclusion of Theorem \ref{thm:pres.X} 
have special signifcance for the 
existence and uniqueness of evaluative presentations.

The following is proven in \cite{BEFGHMMT.2024+}.

\begin{theorem}\label{thm:comp.unit}
If $A$ is a commutative unital $C^*$ algebra, then $\unit_A$ is a computable vector 
of every computable presentation of $A$.  
\end{theorem}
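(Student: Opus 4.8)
The plan is to pass to the Gelfand picture, use Theorem \ref{thm:pres.X} to obtain a well-behaved presentation of the maximal ideal space, and then \emph{decide} proximity to the unit for a dense family of computable vectors. We may assume $A \neq 0$. By Gelfand duality $A$ is isometrically $*$-isomorphic to $C(X)$ for a compact Hausdorff space $X$; since $A$ is separable, $X$ is metrizable, hence a compact Polish space, and under this identification $\unit_A$ is the constant function $1$. For any dense $S \subseteq X$ and any $f \in A$ we have $\|f\| = \sup\{\,|f(p)| : p \in S\,\}$. So, to witness that $\unit_A$ is a computable vector of $A^\#$, it suffices to produce, uniformly from $k$, a computable vector $v$ (given by a code) with $\|v - \unit_A\| < 2^{-k}$, and I will do this by a search through a dense set of vectors whose distance to $\unit_A$ is uniformly computable.

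Apply Theorem \ref{thm:pres.X} to $A^\# = C^*(X)^\#$ to obtain a computably compact presentation $X^\# = (X, d, (p_n)_{n \in \N})$ such that the evaluation functional $\widehat{p_n} \colon f \mapsto f(p_n)$ is a computable functional of $A^\#$ and $\vec{d}_{p_n}$ is a computable vector of $A^\#$, both uniformly in $n$. Let $\mathcal{D}_0$ be the set of polynomials with coefficients in $\Q(i)$ and vanishing constant term in the functions $\vec{d}_{p_0}, \vec{d}_{p_1}, \dots$, viewed as elements of $A$. Each such $v$ is obtained from finitely many of the $\vec{d}_{p_n}$ by the algebra operations of $A^\#$, so from the polynomial one computes a code for $v$ as a computable vector of $A^\#$; moreover $v$ carries a computable modulus of continuity with respect to $d$, being a polynomial in the $1$-Lipschitz functions $\vec{d}_{p_n}$, which take values in $[0, D]$ for a computable rational bound $D \ge \diam(X)$. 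Finally the $*$-subalgebra generated by $\mathcal{D}_0$ separates points of $X$ (given $p \neq q$, some $\vec{d}_{p_n}$ with $p_n$ near $p$ separates them) and vanishes at no point of $X$ (some $\vec{d}_{p_n}$ with $p_n \neq p$ is nonzero at $p$), so by the Stone--Weierstrass theorem $\mathcal{D}_0$ is dense in $A$.

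Next, $\|v - \unit_A\| = \max_{x \in X}|v(x) - 1|$ is computable uniformly in a code for $v \in \mathcal{D}_0$: given $j$, computable compactness of $X^\#$ yields indices $m_1, \dots, m_J$ with $\{p_{m_1}, \dots, p_{m_J}\}$ a $2^{-j}$-net of $X$; each $v(p_{m_i}) = \widehat{p_{m_i}}(v)$ is computable, and if $\omega_v$ is the computable (nondecreasing) modulus of continuity of $v$ then
\[
\max_{1 \le i \le J}|v(p_{m_i}) - 1| \;\le\; \|v - \unit_A\| \;\le\; \max_{1 \le i \le J}|v(p_{m_i}) - 1| \;+\; \omega_v(2^{-j}),
\]
which pins down $\|v - \unit_A\|$ as $j \to \infty$. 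Hence the following computes $\unit_A$: on input $k$, search through $\mathcal{D}_0$ until a $v$ with $\|v - \unit_A\| < 2^{-k}$ is found — density makes the search terminate, and the computability just established lets it recognize success — and output (a code for) $v$.

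The one step that is not purely formal is the production of a dense family of decidably‑approximating vectors, and it is precisely here that the geometric content of Theorem \ref{thm:pres.X} is used. A more self‑contained attempt — take the computable positive vector $g = \sum_n 2^{-n}(1 + \|a_n\|^2)^{-1} a_n^* a_n$, where $(a_n)$ is the distinguished sequence of $A^\#$; it is invertible because it is strictly positive on the compact space $X$ (some $a_n$ is nonzero at each point); rescale to $\|g\|=1$ and observe that $g^{1/2^m} \to \unit_A$ — founders on certifying the rate of convergence, equivalently on bounding $\min\operatorname{spec}(g)$ away from $0$: without $\unit_A$ in hand one cannot readily invoke functional calculus to detect this, and the convergence can be as slow as one likes at a point of $\operatorname{spec}(g)$ near $0$. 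So I expect the genuine work of the proof to lie in the input provided by Theorem \ref{thm:pres.X}.
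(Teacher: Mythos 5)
Your argument downstream of Theorem \ref{thm:pres.X} is carried out carefully, and granting that theorem it does compute $\unit_A$ (one small repair: when $X$ is a single point every $\vec{d}_{p_n}$ is identically $0$, so $\mathcal{D}_0=\{0\}$ and the ``vanishes at no point'' form of Stone--Weierstrass does not apply; since the statement is non-uniform you can split into the cases $A \cong \C$ and $\diam(X)>0$ and handle the first directly, e.g.\ via $v v^*/\norm{v}^2$ for a rational vector $v$ of norm $> 1/2$). The real problem is circularity. This paper does not prove Theorem \ref{thm:comp.unit}; both it and Theorem \ref{thm:pres.X} are imported from \cite{BEFGHMMT.2024+}, and Section \ref{sec:unif} records explicitly that the proof of Theorem \ref{thm:pres.X} there is uniform \emph{except} for its dependence on Theorem \ref{thm:comp.unit}: the computability of $\unit_A$ in the given presentation is precisely the non-uniform ingredient used to manufacture the presentation of $X$ with computable $\widehat{p_n}$ and $\vec{d}_{p_n}$. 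So your proof assumes a result whose available proof already contains the theorem being proved, and hence it does not establish Theorem \ref{thm:comp.unit}.

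Nor can this be fixed by a cleverer appeal to Theorem \ref{thm:pres.X}. Everything you do after invoking it is fully uniform: from a name of $C^*(X)^\#$ together with a name of $X^\#$, a total boundedness function, and indices witnessing the computability of the $\widehat{p_n}$ and $\vec{d}_{p_n}$, your search computes a name of $\unit_A$. But Theorem \ref{thm:nucu} and Corollary \ref{cor:not.unif} show that the unit, and likewise such data for a presentation of $X$, cannot be computed uniformly from an index of the algebra presentation alone; so any proof of Theorem \ref{thm:pres.X} must consume some non-uniform parameter, and in the known proof that parameter is a name of $\unit_A$ itself. Consequently the Gelfand-spectrum route is inherently circular unless you supply an independent proof of Theorem \ref{thm:pres.X} avoiding the unit, which you do not --- and which your closing paragraph in effect concedes is where the genuine work lies; your fallback with the strictly positive element $g$ fails for the reason you state, since $\min\operatorname{spec}(g)$ cannot be bounded away from $0$ and no computable rate for $g^{1/2^m} \to \unit_A$ is available. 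What is needed is a direct argument from the algebra presentation (necessarily non-uniform, by Theorem \ref{thm:nucu}), not a detour through the presented spectrum.
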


The following definitions are from \cite{BEFGHMMT.2024+}.
Fix an effective enumeration $(\ratpoly_n)_{n \in \N}$ of the rational $*$-polynomials 
over indeterminants $x_0, x_1. \ldots$.
If $A^\#$ is a presentation of a $C^*$-algebra $A$, then 
$\ratpoly_n[A^\#]$ denotes the vector obtained from $\ratpoly_n$ by substituting the $j$-th distinguished vector 
of $A^\#$ for $x_j$.  
Let $A$ be a $C^*$ algebra, and fix a presentation $A^\# = (A, (u_n)_{n \in \N})$.
The set $D(A^\#) = \{(r, j, r')\ :\ r,r' \in \Q \cap (0, \infty)\ \wedge\ r < \norm{\ratpoly_j[A^\#]} < r'\}$ 
is the \emph{diagram} of 
$A^\#$.  We say that $f \in \N^\N$ is a \emph{name} of $A^\#$ if 
$D(A^\#) = \{(r,j,r')\ :\ \exists k \in \N f(k) = \langle r,j,r' \rangle\}$.  
It follows that $A^\#$ is computable if and only if it has a computable name.  

Suppose $\mathcal{M} = (X,d)$ is a metric space and $\mathcal{M}^\# = (\mathcal{M}, (s_j)_{j \in \N})$
is a presentation of $\mathcal{M}$. 
The \emph{diagram} of $\mathcal{M}^\#$ is $D(\mathcal{M}^\#) = \{(r,j,k,r')\ :\ r < d(s_j,s_k) < r'\}$.
 A \emph{name} of $\mathcal{M}^\#$ is an $f \in \N^\N$ so that 
$D(\mathcal{M}^\#) = \{(r,j,k,r')\ :\ \exists t \in \N f(t) = \langle r,j,k,r' \rangle\}$.

A \emph{total boundedness function} of $\mathcal{M}^\#$ is an
$f \in \N^\N$ so that for each $j \in \N$, $f(j)$ is a code of a 
finite $F \subseteq \N$ so that $X = \bigcup_{k \in F} B(s_k; 2^{-j})$.  
A presented metric space is compact if and only if it has a total boundedness function.

\section{Preliminaries}\label{sec:prelim}

Let $(\ )$ be a computable bijection of $\N$ onto $\N^2$. 
Let $(\ )_0$, $(\ )_1$ be the inverses of 
$(\ )$.  That is, $(n) = ( (n)_0, (n)_1)$ for all $n \in \N$.

Let $X$ be a compact Polish space.  Suppose $X^\#$ is a presentation of $X$ with metric $d_0$, and suppose 
$C^*(X)^\#$ is a presentation of $C^*(X)$.  Then, $(C^*(X) \times X)^\#$ is the presentation of the \emph{metric space}
$C^*(X) \times X$ whose $n$-th special point consists of the $(n)_0$-th rational vector of $C^*(X)$ and the 
$(n)_1$-th special point of $X^\#$ and whose metric $d$ is defined by 
$d((u, p), (v, q)) = \max\{ \norm{u - v}, d_0(p,q)\}$. 

\begin{definition}\label{def:evaluative}
Let $X^\#$ be a presentation of a compact Polish space $X$, and let 
$C^*(X)^\#$ be a presentation of $C^*(X)$.    We say that $C^*(X)^\#$ is 
\emph{evaluative over $X^\#$} if the evaluation functional of $C^*(X)$
is a computable map from $(C^*(X) \times X)^\#$ to $\C$.
\end{definition}

Let $X^\# = (X, d, (p_n)_{n \in \N})$ be a presentation of $X$.   Suppose 
$f_0$ is the unit of $C^*(X)$ and $f_{n+1} = \vec{d}_{p_n}$.  It follows from
the Stone-Weierstrass Theorem that 
$(C^*(X), (f_n)_{n \in \N})$ is a presentation of $C^*(X)$.
We call this presentation the \emph{presentation induced by $X^\#$}. 
The following augments a result that appears in \cite{Fox.2022+}.

\begin{theorem}\label{thm:pres.c*}
Suppose $X^\# = (X, d, (p_n)_{n \in \N})$ is a computably compact presentation.  
Then, there is a computable presentation $C^*(X)^\#$ so that the following hold.
\begin{enumerate}
	\item $\widehat{p_n}$ is a computable functional of $C^*(X)^\#$ uniformly in $n$.
	
	\item $\vec{d}_{p_n}$ is a computable vector of $C^*(X)^\#$ uniformly in $n$.
\end{enumerate}
\end{theorem}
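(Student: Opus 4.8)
**Proof plan for Theorem \ref{thm:pres.c*}.**

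The plan is to take $C^*(X)^\#$ to be (essentially) the presentation induced by $X^\#$ — i.e.\ use the unit $f_0 = \unit_{C^*(X)}$ together with the functions $f_{n+1} = \vec{d}_{p_n}$ as distinguished vectors — and to show that this presentation is computable and that it satisfies the two listed conditions. By the Stone--Weierstrass Theorem the $\C$-$*$-subalgebra generated by $\{f_n : n \in \N\}$ is dense in $C^*(X)$ (the $\vec{d}_{p_n}$ separate points of $X$ since $(p_n)$ is dense, and adjoining $f_0$ handles the unital requirement), so $(C^*(X), (f_n)_{n\in\N})$ is indeed a presentation. What remains is the effectiveness.

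First I would show that the diagram $D(C^*(X)^\#)$ is c.e., i.e.\ that for a rational $*$-polynomial $\ratpoly_j$ the number $\norm{\ratpoly_j[C^*(X)^\#]}$ can be computed from $j$ in the sense that we can enumerate rationals strictly below and strictly above it. Each $\ratpoly_j[C^*(X)^\#]$ is a specific continuous function $g_j$ on $X$ built from the $f_n$'s by the algebra operations; because these operations (pointwise sum, product, scalar multiple, complex conjugate) are computable on function values and the $f_n$ have a common computable modulus of continuity inherited from $d$ (indeed each $\vec{d}_{p_n}$ is $1$-Lipschitz), the map $(j, p) \mapsto g_j(p)$ is computable uniformly on $X^\#$. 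Now $\norm{g_j} = \sup_{p \in X} |g_j(p)|$, and here is where computable compactness of $X^\#$ does the essential work: using a total boundedness function for $X^\#$ together with the modulus of continuity of $g_j$, one can compute $\norm{g_j}$ to any desired precision — cover $X$ by finitely many balls $B(p_k; 2^{-m})$, evaluate $g_j$ at the centers, and control the error by the modulus. This gives that $\norm{\ratpoly_j[C^*(X)^\#]}$ is uniformly computable in $j$, hence $C^*(X)^\#$ has a computable name and is a computable presentation.

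For condition (2), the vector $\vec{d}_{p_n} = f_{n+1}$ is literally a distinguished vector of $C^*(X)^\#$, so it is trivially computable uniformly in $n$. For condition (1), I must show the point-evaluation functional $\widehat{p_n} : f \mapsto f(p_n)$ is a computable functional, uniformly in $n$; it suffices to show it is computable on the distinguished vectors with a modulus, i.e.\ that $(n, j) \mapsto \ratpoly_j[C^*(X)^\#](p_n)$ is computable and that $\widehat{p_n}$ is $1$-Lipschitz (which is immediate, as $|f(p_n) - g(p_n)| \le \norm{f-g}$). The value $\ratpoly_j[C^*(X)^\#](p_n) = g_j(p_n)$ is computable from the already-established uniform computability of $(j,p)\mapsto g_j(p)$ evaluated at the distinguished point $p_n$ — this just requires that each generator's value at $p_n$ is computable, and indeed $f_0(p_n) = 1$ while $f_{m+1}(p_n) = \vec{d}_{p_m}(p_n) = d(p_m, p_n)$, which is computable from $(m,n)$ since $X^\#$ is computably compact (hence computable).

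The main obstacle is the computation of the norms $\norm{\ratpoly_j[C^*(X)^\#]}$, i.e.\ turning the $\sup$-norm on $C(X)$ into something effectively approximable; this is exactly the step that fails without computable compactness (mere compactness of $X$ with a computable but not computably compact presentation $X^\#$ would not let us locate a finite $2^{-m}$-net effectively, and the $\sup$ could be approached only from below). Everything else is a routine bookkeeping of uniform computability through the $*$-algebra operations and the Lipschitz bounds on the generators. This theorem is the converse direction to Theorem \ref{thm:pres.X}: together they show computable compactness of $X^\#$ is the precise condition matching computable presentations of $C^*(X)$ satisfying conditions (1)--(2).
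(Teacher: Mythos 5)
Your proposal is correct and follows essentially the same route as the paper: both take $C^*(X)^\#$ to be the presentation induced by $X^\#$, observe that condition (2) is then immediate because $\vec{d}_{p_n}$ is a distinguished vector, and obtain condition (1) from computability of evaluation on the rational vectors together with $\norm{\widehat{p_n}} = 1$. The only difference is that the paper cites Fox's result for the computability of the induced presentation, whereas you prove that step directly (via the total boundedness function, the $1$-Lipschitz generators, and a computable modulus for the rational $*$-polynomial combinations), which is a sound filling-in of the cited ingredient.
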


\begin{proof}
Let $A = C^*(X)$, and let $A^\#$ denote the presentation of $A$ induced by $X^\#$.  
It is shown in \cite{Fox.2022+} that $A^\#$ is computable.  By definition, 
$\vec{d}_{p_n}$ is a computable vector of $C^*(X)^\#$.  
Let $n \in \N$.  Since $X^\#$ is computable, it follows that $\hat{p}_n$ is computable on 
the rational vectors of $A^\#$ uniformly in $n$.  However, since $\norm{\hat{p_n}} = 1$, it follows that 
$\hat{p}_n$ is a computable map of $A^\#$ into $\C$ uniformly in $n$.
\end{proof}

We now show that the conditions in the conclusion of Theorem \ref{thm:pres.c*}
guarantee the evaluativeness of a presentation.

\begin{lemma}\label{lm:eval.1}
Suppose $X^\# = (X, d, (p_n)_{n \in \N})$ is a computable presentation of 
a compact Polish space $X$, 
and let $C^*(X)^\#$ be a computable presentation of $C^*(X)$.  
Further, suppose the following.
\begin{enumerate}
	\item $\widehat{p_n}$ is a computable functional of $C^*(X)^\#$ uniformly in $n$.
	
	\item $\vec{d}_{p_n}$ is a computable vector of $C^*(X)^\#$ uniformly in $n$.
\end{enumerate}
Then, $C^*(X)^\#$ is evaluative over $X^\#$.
\end{lemma}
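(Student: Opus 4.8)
The plan is to give an algorithm witnessing that the evaluation functional $E\colon C^*(X)\times X\to\C$, $E(f,p)=f(p)$, is computable from $(C^*(X)\times X)^\#$ to $\C$. A name of a point $(f,p)$ of $(C^*(X)\times X)^\#$ provides, uniformly, a sequence $(v_k)_{k\in\N}$ of rational vectors of $C^*(X)^\#$ with $\norm{f-v_k}\le 2^{-k}$ together with a computable sequence $(m_j)_{j\in\N}$ with $d(p,p_{m_j})\le 2^{-j}$; I must produce, for each target precision $2^{-l}$, a rational $\zeta$ with $|\zeta-f(p)|\le 2^{-l}$.

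The naive estimate $f(p)\approx v_k(p_{m_j})$ founders because $C^*(X)^\#$ is an arbitrary computable presentation: no modulus of continuity for $f$, nor for the rational vectors $v_k$, is available, so one cannot tell when $v_k(p_{m_j})$ is close to $v_k(p)$. This is the main obstacle, and I circumvent it by approximating $f$ not by arbitrary rational vectors but by rational $*$-polynomials $g$ in $\unit$ and finitely many of the functions $\vec{d}_{p_n}$, each of which is $1$-Lipschitz (or constant) on $X$. By Theorem~\ref{thm:comp.unit} the unit is a computable vector of $C^*(X)^\#$, and by hypothesis~(2) so is each $\vec{d}_{p_n}$, uniformly in $n$; since the $C^*$-algebra operations are computable in $C^*(X)^\#$, each such $g$ is a computable vector of $C^*(X)^\#$ uniformly in a code for $g$, and hence $\norm{f-g}$ is a computable real uniformly in the name of $f$ and a code for $g$. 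By the Stone--Weierstrass Theorem these $g$ are dense in $C^*(X)$, so from the name of $f$ one can search for a code of such a $g$ with $\norm{f-g}$ as small as desired. Moreover, from a code of $g$ one computes a Lipschitz constant $L_g$ for $g\colon X\to\C$ (a $*$-polynomial in $1$-Lipschitz and constant functions, using that the sup-norms $\norm{\vec{d}_{p_n}}$ are computable reals), and one computes $g(p_{m_j})$, since it is the same $*$-polynomial evaluated at $1$ and at the numbers $\vec{d}_{p_n}(p_{m_j})=d(p_n,p_{m_j})$, which are computable from a name of $X^\#$.

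The algorithm is then: given $l$, compute a code for a suitable $g$ with $\norm{f-g}\le 2^{-l-2}$ together with an (integer) upper bound $L_g$ for its Lipschitz constant; pick $j$ with $L_g\,2^{-j}\le 2^{-l-2}$; compute a rational $\zeta$ with $|\zeta-g(p_{m_j})|\le 2^{-l-2}$; and output $\zeta$. Then
\[
|\zeta-f(p)|\le|\zeta-g(p_{m_j})|+|g(p_{m_j})-g(p)|+|g(p)-f(p)|\le 2^{-l-2}+L_g\,d(p_{m_j},p)+\norm{g-f}\le 2^{-l}.
\]
Every step is uniform in the name of $(f,p)$, so $E$ is computable and $C^*(X)^\#$ is evaluative over $X^\#$. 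The only delicate points are bookkeeping ones: that a Lipschitz bound for a rational $*$-polynomial in norm-bounded $1$-Lipschitz generators can be computed from a code for the polynomial (via bounds on its partial derivatives on the relevant polydisc), and that the claimed uniformities hold, which follows from the computability of the algebra operations together with hypothesis~(2) and Theorem~\ref{thm:comp.unit}.
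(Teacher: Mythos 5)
Your proof is correct and follows essentially the same route as the paper's: you approximate $f$ in norm by rational $*$-polynomials in $\unit_A$ and the $\vec{d}_{p_n}$ (the rational vectors of the presentation induced by $X^\#$), using hypothesis (2) and Theorem \ref{thm:comp.unit} to make the search effective, and then evaluate such a polynomial near $p$ via its Lipschitz modulus. The only difference is presentational: the paper packages this through the induced presentation $A^+$ and the computable identity map $A^\# \to A^+$, while you inline that argument and make the modulus bookkeeping explicit.
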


\begin{proof}
Let $A = C^*(X)$, and let $A^+ = (A, (f_n)_{n \in \N})$ be the presentation of $A$ induced by $X^\#$.  

Let $(\rho_n)_{n \in \N}$ be an effective enumeration of the rational vectors of $A^+$.
Since $d(p_n,p_m) = \widehat{p_m}(\vec{d}_{p_n})$, the hypotheses guarantee that 
$f_n$ is computable on the distinguished points of $X^\#$ uniformly in $n$.  
The identity map on $\N$ is a modulus of continuity for each $f_n$.  
Thus, $f_n$ is a computable map from $X^\#$ to $\C$ uniformly in $n$.
It then follows that $\rho_n$ is a computable map from $X^\#$ to $\C$ uniformly in $n$.

Suppose we are given an $A^+$-name of $f$ and a name of $z \in \C$.  Let $k \in \N$.
We can then compute a rational vector $\rho$ of $A^+$ so that 
$\norm{\rho - f} < 2^{-(k+1)}$.  Since $\rho$ is a computable map from $X^\#$ to $\C$, 
we can then compute a rational point $w$ of $\C$ so that $|\rho(p) - w| < 2^{-(k+1)}$.
Hence, $|f(p) - w| < 2^{-k}$.  All this is to say that from the given names of $f$ and $z$
it is possible to compute a name of $f(p)$.  Thus, $A^+$ is evaluative over $X^\#$.

Since the identity map is a computable function from $A^\#$ onto $A^+$, 
it follows that $A^\#$ is evaluative over $X^\#$.  
\end{proof}

We now show that the first condition of the conclusion of Theorem \ref{thm:pres.c*}
implies the second condition along with a certain uniqueness property.

\begin{theorem}\label{thm:eval.2}
Let $C^*(X)^\#$ be a computable presentation of $C^*(X)$.  
Suppose $X^\# = (X,d,(p_n)_{n \in \N})$ is a computable presentation of 
$X$ so that $\widehat{p_n}$ is a computable functional of $C^*(X)^\#$ uniformly 
in $n$.  Then:
\begin{enumerate}
	\item $\vec{d}_{p_n}$ is a computable vector of $C^*(X)^\#$ uniformly in $n$.  
	
	\item If $X^+ = (X, d', (q_n)_{n \in \N})$ is a computably compact presentation of $X$ so that 
	$\vec{d'}_{q_n}$ is a computable vector of $C^*(X)^\#$ uniformly in $n$, then 
	$X^+$ and $X^\#$ are computably homeomorphic.
\end{enumerate}
\end{theorem}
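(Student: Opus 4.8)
The plan is to handle the two parts separately, using the functional $\widehat{p_n}$ as the bridge between the algebra side and the space side.

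For part (1), I would exploit the algebraic identity $d(p_n,p_m) = \widehat{p_m}\bigl(\vec d_{p_n}\bigr)$ — or rather its reverse: I want to recover $\vec d_{p_n}$ as a vector of $C^*(X)^\#$. The key observation is that $\vec d_{p_n}$ is a nonnegative function vanishing exactly at $p_n$, and its values at the various $p_m$ are given by $d(p_n,p_m)$, which are computable reals uniformly in $n,m$ because $X^\#$ is a computable presentation. So the idea is: the function $\vec d_{p_n}$ is the unique element $g$ of $C^*(X)$ that is real, nonnegative, satisfies $\widehat{p_m}(g) = d(p_n,p_m)$ for all $m$, and is ``$1$-Lipschitz'' with respect to the $p_m$'s in the appropriate formal sense. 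To turn uniqueness into computability I would run a standard search-and-verify argument: enumerate candidate rational vectors $\rho$ of $C^*(X)^\#$; for each, use the uniform computability of $\widehat{p_m}$ to check $|\widehat{p_m}(\rho) - d(p_n,p_m)|$ is small for finitely many $m$; then use the Stone--Weierstrass density of the $p_m$-evaluations together with the Lipschitz modulus to conclude that any $\rho$ passing enough tests is uniformly close in norm to $\vec d_{p_n}$. Concretely, for $p,q\in X$ we have $\big|\vec d_{p_n}(p) - \vec d_{p_n}(q)\big| \le d(p,q)$, and since the $p_m$ are dense, a candidate $\rho$ agreeing with $d(p_n,\cdot)$ to within $\varepsilon$ on a $\delta$-net of the $p_m$'s forces $\|\rho - \vec d_{p_n}\| \le \varepsilon + \delta$ (using that $\rho$ is itself Lipschitz-controlled, or replacing $\rho$ by its values and invoking continuity). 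Assembling this gives a fast Cauchy sequence of rational vectors converging to $\vec d_{p_n}$, uniformly in $n$.

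For part (2), note first that once (1) is established, the hypothesis on $X^+$ is symmetric to the situation for $X^\#$: both $\widehat{p_n}$ and $\widehat{q_n}$ are computable functionals of $C^*(X)^\#$, and (by (1) applied on each side, once we also check the $\widehat{q_n}$ hypothesis is inherited — actually $X^+$ comes with $\vec{d'}_{q_n}$ computable by assumption, and we get $\widehat{q_n}$ computable as in the proof of Theorem~\ref{thm:pres.c*} since $\|\widehat{q_n}\|=1$) both families of $\vec d$-vectors are computable vectors of $C^*(X)^\#$. Now by Lemma~\ref{lm:eval.1}, $C^*(X)^\#$ is evaluative over $X^\#$ and, by the analogous reasoning, over $X^+$. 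To build the homeomorphism $X^+ \to X^\#$, I would proceed pointwise: given a name of $q \in X^+$, the evaluation map lets us compute $\widehat q(\vec d_{p_n}) = d(p_n, \psi(q))$ for all $n$, where $\psi$ is the underlying identity of point sets; since $X^\#$ is a presentation of $X$ these distances determine $\psi(q) \in X$, and computing them uniformly yields a name of $\psi(q)$ in $X^\#$. This shows $\psi$ is computable as a map $X^+ \to X^\#$. For the inverse, I would use computable compactness of $X^+$: the image of a total-boundedness structure under a computable map, together with the fact that $\psi$ is a bijection, lets us invert — more carefully, computable compactness of $X^+$ plus computability of $\psi$ gives that $\psi$ is a computable surjection with computably located fibers (singletons), which upgrades to computability of $\psi^{-1}$; alternatively one checks directly that $d(p_n, \psi(q))$ being uniformly computable in $n$ from $q$ lets us, for a given point $p\in X$ presented in $X^\#$, search through the total-boundedness net of $X^+$ to locate the preimage.

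The main obstacle I expect is part (1): turning the uniqueness characterization of $\vec d_{p_n}$ into an honest algorithm producing a fast-Cauchy sequence of rational vectors. The subtlety is that ``$\widehat{p_m}(\rho)$ close to $d(p_n,p_m)$ for many $m$'' must be leveraged into a genuine \emph{norm} bound $\|\rho - \vec d_{p_n}\|$ small, and this requires a uniform modulus-of-density estimate — essentially a quantitative Stone--Weierstrass input controlling how well the $p_m$-evaluations approximate the sup norm. Computable compactness of $X^\#$ is what supplies this: a total boundedness function gives, for each precision $2^{-k}$, a finite set of $p_m$'s that $2^{-k}$-net $X$, so checking $\rho$ on that finite set suffices. (This is also where, in part (2), computable compactness of $X^+$ — not merely computability — is genuinely used, for inverting $\psi$.) Everything else is bookkeeping with the uniformity clauses and the identity map $A^\# \to A^+$ as in Lemma~\ref{lm:eval.1}.
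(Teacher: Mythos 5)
Your part (1) has a genuine gap, in two places. First, you lean on computable compactness of $X^\#$ (``a total boundedness function gives, for each precision $2^{-k}$, a finite set of $p_m$'s that $2^{-k}$-net $X$''), but the theorem only assumes that $X^\#$ is a \emph{computable} presentation; computable compactness of $X^\#$ is not a hypothesis --- it is in effect a conclusion (it is exactly what Theorem \ref{thm:main.2} asserts, and that theorem is deduced from the present one), so assuming it here is circular. Second, even granting a total boundedness function, your verification step does not go through: to certify that a candidate rational vector $\rho$ of $C^*(X)^\#$ satisfies $\norm{\rho - \vec{d}_{p_n}} < 2^{-k}$ you need an \emph{upper} bound on $\sup_{x \in X}|\rho(x) - d(p_n,x)|$, and agreement with $d(p_n,\cdot)$ on a finite $\delta$-net yields such a bound only if you control a modulus of continuity of $\rho$. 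The presentation $C^*(X)^\#$ gives you norms of rational vectors and (via the $\widehat{p_m}$) their values at the $p_m$, but no modulus: a rational vector of an arbitrary computable presentation need not be Lipschitz, and its modulus is not computable from the given data, so your parenthetical ``$\rho$ is itself Lipschitz-controlled'' is unjustified. Point checks on a net only lower-semicompute $\norm{\rho - \vec{d}_{p_n}}$; your search can reject bad candidates but can never accept a good one.

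The paper avoids both problems by a detour you do not take: it first applies Theorem \ref{thm:pres.X} to $C^*(X)^\#$ to obtain an auxiliary \emph{computably compact} presentation $X^+ = (X, d', (q_n)_{n\in\N})$ whose distance functions $\vec{d'}_{q_n}$ are computable vectors of $C^*(X)^\#$, and then works with the presentation $A^+$ of $C^*(X)$ induced by $X^+$: its rational vectors are $*$-polynomials in $\unit_A$ and the $\vec{d'}_{q_n}$, hence functions with known moduli that are computable maps of $X^\#$ into $\C$, and the identity $A^+ \to A^\#$ is computable. Only after using this auxiliary presentation to conclude that $X^\#$ is computably compact does the paper compute $\norm{\vec{d}_{p_n} - \rho}$ as a maximum over $X^\#$ --- and, crucially, the candidates $\rho$ range over rational vectors of $A^+$, not of $A^\#$. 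Some version of this maneuver (or the underlying effective Gelfand duality) is needed; the direct search among rational vectors of $C^*(X)^\#$ cannot be repaired in place. Your part (2) is essentially sound once (1) is in hand --- your two-sided use of evaluation to compute $d(p_n,q)$ and $d'(q_m,p)$ matches the spirit of the paper's argument, and your treatment of the inverse is if anything more careful --- but as you set it up it rests on (1), so the gap propagates.
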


\begin{proof}
Let $A = C^*(X)$.  
By Theorem \ref{thm:pres.c*}, there \emph{is} a computably compact presentation 
$X^+ = (X, d', (q_n)_{n \in \N})$ of $X$ so that 
	$\vec{d'}_{q_n}$ is a computable vector of $C^*(X)^\#$ uniformly in $n$.

Let $A^+ = (A, (f_n)_{n \in \N})$ be the presentation of $A$ induced by $X^+$.
  Since $d'(q_n, p_m) = \widehat{p_m}(\vec{d'}_{q_n})$, 
it follows that $\vec{d'}_{q_n}$ is computable on the distinguished points of $X^\#$
uniformly in $n$.  Again, the identity map on $\N$ is a modulus of continuity for 
each $f_n$, and so $f_n$ is a computable map of $X^\#$ into $\C$ uniformly in $n$.

We now claim that the identity map is a computable map of $X^\#$ to $X^+$.
It suffices to show $(p_n)_{n \in \N}$ is a computable sequence of $X^+$.  
Since the identity map is a computable map of $A^+$ to $A^\#$, 
$\widehat{p_n}$ is a computable functional of $A^+$ uniformly in $n$.  
Hence, $d'(p_n, q_m) = \widehat{p_n}(\vec{d'}_{q_m})$ is computable uniformly in $m,n$.
It now follows that $p_n$ is a computable point of $X^+$ uniformly in $n$.  

It now follows that $X^\#$ is computably compact.
We now show $\vec{d}_{p_n}$ is a computable vector of $A^+$ uniformly in $n$.
Let $(\rho_n)_{n \in \N}$ be an effective enumeration of the rational vectors of 
$A^+$.  Since $f_n$ is a computable map of $X^\#$ to $\C$ uniformly in $n$, 
it follows that $\rho_n$ is a computable function of $X^\#$ into $\C$ uniformly in $n$.
Since $X^\#$ is computably compact, $\norm{\vec{d}_{p_n} - \rho_m}$ is computable
uniformly in $m,n$.  It follows that $\vec{d}_{p_n}$ is a computable vector of $A^+$.

Since the identity map is a computable function from $A^+$ to $A^\#$, 
it now follows that $\vec{d}_{p_n}$ is a computable vector of $A^\#$ uniformly in $n$.
\end{proof}

We now show that if $C^*(X)^\#$ and $X^\#$ satisfy the second condition of the conclusion of 
Theorem \ref{thm:pres.c*}, then $C^*(X)^\#$ is essentially identical to the 
presentation induced by $X^\#$.  

\begin{proposition}\label{prop:induced.comp}
Let $X^\# = (X, d, (p_n)_{n \in \N})$ be a presentation of a compact Polish space $X$.
If $C^*(X)^\#$ is a computable presentation, and if 
$\vec{d}_{p_n}$ is a computable vector of $C^*(X)^\#$ uniformly in $n$, then the 
presentation induced by $X^\#$ is computable and is isometrically isomorphic to $C^*(X)^\#$
via the identity map.
\end{proposition}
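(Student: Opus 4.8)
The plan is to establish three facts: that the presentation $A^+$ of $A := C^*(X)$ induced by $X^\#$ is computable, that the identity map is a computable map from $A^+$ to $C^*(X)^\#$, and that it is also a computable map from $C^*(X)^\#$ to $A^+$. Since $A^+$ and $C^*(X)^\#$ present the same algebra $A$, the identity map on $A$ is an isometric isomorphism between them, so these three facts together yield the proposition. Everything rests on one preliminary observation: every distinguished vector $f_n$ of $A^+$ is a computable vector of $C^*(X)^\#$, uniformly in $n$. Indeed $f_0 = \unit_A$, which is a computable vector of $C^*(X)^\#$ by Theorem \ref{thm:comp.unit} (here $A$ is commutative and unital because $X$ is compact), while for $n \geq 1$ the vector $f_n = \vec{d}_{p_{n-1}}$ is a computable vector of $C^*(X)^\#$ uniformly in $n$ by hypothesis.

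From this I would deduce that $A^+$ is computable. Since addition, multiplication, scalar multiplication, and involution of $A$ are computable with respect to $C^*(X)^\#$, and since each $f_n$ is a computable vector of $C^*(X)^\#$ uniformly in $n$, the vector $\ratpoly_j[A^+]$ — obtained by substituting the $f_n$ into the $j$-th rational $*$-polynomial — is a computable vector of $C^*(X)^\#$ uniformly in $j$. The norm of a computable vector of a computable presentation is a computable real, uniformly in an index for the vector, so $\norm{\ratpoly_j[A^+]}$ is computable uniformly in $j$; hence $D(A^+)$ is c.e.\ and $A^+$ is a computable presentation. The same computation shows that the identity map, regarded as a map from $A^+$ to $C^*(X)^\#$, sends the distinguished sequence $(f_n)_{n \in \N}$ of $A^+$ to a computable sequence of $C^*(X)^\#$; being an isometry, it is therefore a computable map from $A^+$ to $C^*(X)^\#$.

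It remains to show that the identity map is a computable map from $C^*(X)^\#$ to $A^+$, i.e.\ that each distinguished vector $u_m$ of $C^*(X)^\#$ is a computable vector of $A^+$, uniformly in $m$. For this I would argue by effective search. For each $j$ and $m$, both $\ratpoly_j[A^+]$ and $u_m$ are computable vectors of $C^*(X)^\#$, uniformly, so $\norm{\ratpoly_j[A^+] - u_m}$ can be computed to within any prescribed tolerance, uniformly in $j$ and $m$. By the Stone--Weierstrass Theorem the $*$-subalgebra generated by $\{f_n : n \in \N\}$ is dense in $A$, so the vectors $\ratpoly_j[A^+]$, $j \in \N$, are dense in $A$; consequently, for each $m$ and each $k$ there is some $j$ with $\norm{\ratpoly_j[A^+] - u_m} < 2^{-k}$, and searching through $j$ while computing sufficiently accurate approximations to these norms locates such a $j$ effectively in $m$ and $k$. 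Hence $u_m$ is a computable vector of $A^+$, uniformly in $m$, and the identity map from $C^*(X)^\#$ to $A^+$ is computable. Combining the two directions, the identity map is a computable isometric isomorphism between $A^+$ and $C^*(X)^\#$, which is the assertion of the proposition. I expect this last paragraph to be the delicate point: one must be careful to compute the distances $\norm{\ratpoly_j[A^+] - u_m}$ by passing through $C^*(X)^\#$, where both arguments are computable vectors, and to note that the search terminates precisely because of the density supplied by Stone--Weierstrass.
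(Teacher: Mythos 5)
Your proposal is correct and follows essentially the same route as the paper: show the distinguished vectors $f_n$ of the induced presentation are uniformly computable vectors of $C^*(X)^\#$ (using Theorem \ref{thm:comp.unit} for the unit), deduce that the rational vectors of the induced presentation are uniformly computable vectors of $C^*(X)^\#$ so their norms are uniformly computable, hence the induced presentation is computable and the identity map is computable. The only difference is that you explicitly verify the direction from $C^*(X)^\#$ to the induced presentation by the density-plus-search argument, a step the paper compresses into ``it now follows.''
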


\begin{proof}
Let $A = C^*(X)^\#$.  
Suppose $A^\#$ is a computable presentation so that $\vec{d}_{p_n}$ is a computable vector of 
$A^\#$ uniformly in $n$.  Set $A = C^*(X)$, and let $A^+ = (A, (f_n)_{n \in \N})$ be the presentation 
induced by $X^\#$.  By Theorem \ref{thm:comp.unit}, $\unit_A$ is a computable vector of 
$A$.  Thus, $f_n$ is a computable vector of $A^\#$ uniformly in $n$.  Let $(\rho_n)_{n \in \N}$ be an 
effective enumeration of the rational vectors of $A^\#$.  Since the operations of $A$ are computable
operators of $A^\#$, it follows that $\rho_n$ is a computable vector of $A^\#$ uniformly in $n$.  
Hence, $\norm{\rho_n}$ is computable uniformly in $n$.  Thus, $A^+$ is computable.

It now follows that the identity map is a computable function from $A^\#$ to $A^+$.
\end{proof}

\section{Proofs of the main theorems}\label{sec:proofs.main}

The main theorems are simple consequences of the results in Section \ref{sec:prelim}.

\begin{proof}[Proof of Theorem \ref{thm:main.1}]
The existence of a presentation of $X$ over which $C^*(X)^\#$ is evaluative 
is essentially shown in \cite{BEFGHMMT.2024+}.  We note that it also follows
from Theorem \ref{thm:pres.X} and Lemma \ref{lm:eval.1}.  
The uniqueness follows from Theorem \ref{thm:eval.2}.
\end{proof}

\begin{proof}[Proof of theorem \ref{thm:main.2}]
This follows from Theorem \ref{thm:eval.2}.
\end{proof}

\begin{proof}[Proof of Theorem \ref{thm:main.3}]
The existence of such a presentation follows from Theorem \ref{thm:pres.c*} and 
Lemma \ref{lm:eval.1}.  
Let $A = C^*(X)$, and suppose $A^{\#_1}$ and $A^{\#_2}$ are evaluative over 
$X^\#$.  Thus, $\widehat{p_n}$ is a computable functional of $A^{\#_j}$ uniformly in $n,j$.
So, by Theorem \ref{thm:eval.2}, $\vec{d}_{p_n}$ is a computable vector of 
$A^{\#_j}$ uniformly in $n,j$.  

By Proposition \ref{prop:induced.comp}, 
$A^{\#_j}$ is computably isometrically isomorphic to the presentation of $A$
induced by $X^\#$. 
\end{proof}

\section{Uniformity and non-uniformity results}\label{sec:unif}

By inspection, the proof of Lemma \ref{lm:eval.1} and Theorem \ref{thm:main.2} are almost
fully uniform; the only obstacle is their dependence on 
Theorem \ref{thm:comp.unit} whose proof is not uniform.  
Thus, fully uniform versions can by obtained by adding a name of the unit as parameter.
This is also the situation with regards to the proof of Theorem \ref{thm:pres.X} in 
\cite{BEFGHMMT.2024+}.  
By combining these observations, we obtain the following.

\begin{theorem}\label{thm:unif.1}
Suppose $A = C^*(X)$.  
From a name of a presentation $A^\#$ and a $A^\#$-name of $\unit_A$, 
it is possible to compute a name of the presentation of $X$ over which $A^\#$ is evaluative
and a total boundedness function for this presentation.
Furthermore, given names of the same presentations, the computed names refer to presentations 
of $X$ that are computably homeomorphic via the identity map.
\end{theorem}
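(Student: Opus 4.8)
The plan is to bolt together uniform versions of three ingredients already in hand: the construction behind Theorem~\ref{thm:pres.X} from \cite{BEFGHMMT.2024+}, Lemma~\ref{lm:eval.1}, and Theorem~\ref{thm:eval.2}. As the preceding discussion notes, each of these is uniform apart from its dependence on Theorem~\ref{thm:comp.unit}, and each is made uniform by carrying along an $A^\#$-name of $\unit_A$ as an auxiliary input.

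First I would recast the proof of Theorem~\ref{thm:pres.X} as a single Turing functional which, on input a name of $A^\#$ together with an $A^\#$-name of $\unit_A$, outputs: (i) a name of a computably compact presentation $X^\# = (X,d,(p_n)_{n \in \N})$ of $X$; (ii) a total boundedness function for $X^\#$, extracted from the computable-compactness portion of that construction; and (iii) indices witnessing clauses (1) and (2) of Theorem~\ref{thm:pres.X}, namely that $\widehat{p_n}$ is a computable functional of $A^\#$ uniformly in $n$ and that $\vec{d}_{p_n}$ is a computable vector of $A^\#$ uniformly in $n$. Composing this functional with the manifestly uniform proof of Lemma~\ref{lm:eval.1} shows that $A^\#$ is evaluative over the $X^\#$ just produced, and by Theorem~\ref{thm:main.1} this $X^\#$ is, up to computable homeomorphism, the presentation of $X$ over which $A^\#$ is evaluative. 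This yields the first assertion.

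For the uniqueness clause, run the functional of the previous paragraph on two inputs: a name of $A^\#$ with an $A^\#$-name of $\unit_A$, and a possibly different name of the same $A^\#$ with a possibly different $A^\#$-name of $\unit_A$. Let $X^{\#_1} = (X,d_1,(p^1_n)_{n \in \N})$ and $X^{\#_2} = (X,d_2,(p^2_n)_{n \in \N})$ be the resulting computably compact presentations of $X$, together with their attendant indices. By Lemma~\ref{lm:eval.1}, $A^\#$ is evaluative over each $X^{\#_i}$; since each $p^i_n$ is a distinguished point of $X^{\#_i}$, it follows that $\widehat{p^i_n}$ is a computable functional of $A^\#$ uniformly in $n$ and $i$, and clause (iii) supplies that $\vec{d_i}_{p^i_n}$ is a computable vector of $A^\#$ uniformly in $n$ and $i$. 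I would then apply the proof of Theorem~\ref{thm:eval.2} --- itself made uniform by one of the supplied names of $\unit_A$ --- with $X^{\#_1}$ in the role of $X^\#$ and $X^{\#_2}$ in the role of $X^+$; the hypotheses hold because $X^{\#_2}$ is computably compact and $\vec{d_2}_{p^2_n}$ is a computable vector of $A^\#$. The conclusion is that $X^{\#_1}$ and $X^{\#_2}$ are computably homeomorphic, and inspection of the proof of Theorem~\ref{thm:eval.2} shows the witnessing homeomorphism to be the identity map on $X$: that proof works with the identity throughout, first showing it computable from $X^{\#_1}$ to $X^{\#_2}$ and then, both presentations being computably compact, inverting the resulting computable continuous bijection. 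Since every step is carried out by fixed Turing functionals applied to the given names, an index for this homeomorphism and for its inverse can itself be computed from the data.

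The main obstacle is the bookkeeping behind the second paragraph: one must go through the construction of Theorem~\ref{thm:pres.X} in \cite{BEFGHMMT.2024+} and verify that, apart from its single appeal to Theorem~\ref{thm:comp.unit}, it uses only the given name of $A^\#$ together with operations that are uniformly computable, and in particular that the total boundedness function for the output presentation is produced uniformly rather than merely shown to exist. Once that is secured, everything downstream --- Lemma~\ref{lm:eval.1}, Theorem~\ref{thm:eval.2}, and the passage from ``computably homeomorphic'' to ``computably homeomorphic via the identity'' --- is already uniform by inspection, and the theorem follows by composing the relevant algorithms.
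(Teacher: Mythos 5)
Your proposal is correct and follows essentially the same route as the paper, which obtains Theorem~\ref{thm:unif.1} precisely by adding an $A^\#$-name of $\unit_A$ as a parameter to uniformize the proofs of Theorem~\ref{thm:pres.X}, Lemma~\ref{lm:eval.1}, and Theorem~\ref{thm:eval.2} (via Theorem~\ref{thm:main.2}) and then composing them. Your extra care about the identity map and the uniform extraction of the total boundedness function is exactly the ``by inspection'' bookkeeping the paper leaves implicit.
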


Our proof of Theorem \ref{thm:pres.c*} shows that it is fully uniform, and so
we obtain the following.

\begin{theorem}\label{thm:unif.2}
From a name of a presentation $X^\#$ of a compact Polish space and a total boundedness function for 
$X^\#$, it is possible to compute a name for the presentation of $C^*(X)$ induced by $X^\#$.  Furthermore, 
given two names for the same presentation of $X^\#$, the computed names refer to presentations of 
$C^*(X)$ that are computably isometrically isomorphic via the identity map.
\end{theorem}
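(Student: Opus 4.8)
The statement has two parts: a uniform existence/computation claim, and a uniform uniqueness (identity-map) claim. Both should fall out of making the proof of Theorem \ref{thm:pres.c*} — and the underlying result of Fox that the induced presentation is computable — uniform in the data, namely a name for $X^\#$ together with a total boundedness function. First I would recall that the induced presentation $A^+ = (C^*(X), (f_n)_{n\in\N})$ has $f_0 = \unit_{C^*(X)}$ and $f_{n+1} = \vec{d}_{p_n}$, so its rational vectors are exactly the $\ratpoly_j[A^+]$, and a name of $A^+$ is by definition an enumeration of the triples $(r,j,r')$ with $r < \norm{\ratpoly_j[A^+]} < r'$. Hence the whole task reduces to: from a name of $X^\#$ and a total boundedness function for $X^\#$, enumerate (uniformly) exactly the rationals strictly below and strictly above each $\norm{\ratpoly_j[A^+]}$.

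The key computation is that $\norm{\ratpoly_j[A^+]}$ is computable uniformly in $j$ \emph{from the given data}. Writing $g_j = \ratpoly_j[A^+] \in C^*(X)$, the sup-norm is $\norm{g_j} = \sup_{p\in X} |g_j(p)|$, and $g_j$ is a fixed $*$-polynomial combination of the functions $1$ and $q\mapsto d(p_n,q)$. The values $d(p_n,p_m)$ are computable from a name of $X^\#$ uniformly in $n,m$, so $g_j$ is computable on the dense set $(p_n)_{n\in\N}$ uniformly in $j$; moreover a modulus of continuity for $g_j$ is computable from $j$ alone (each $\vec{d}_{p_n}$ is $1$-Lipschitz, and the $*$-polynomial structure of $\ratpoly_j$ then yields an explicit Lipschitz-type modulus on the relevant bounded region, the bound on $\norm{g_j}$ itself being obtained crudely from the coefficients and the diameter of $X$, which the total boundedness function bounds). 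With a modulus of continuity in hand and a total boundedness function supplying, for each precision $2^{-k}$, a finite $2^{-k}$-net among the $p_n$, we approximate $\sup_{p\in X}|g_j(p)|$ by the maximum of $|g_j(p_n)|$ over the net, with error controlled by the modulus — this is the standard "compute the sup of a computable function over a computably compact space" argument. That gives $\norm{g_j}$ to within any desired precision uniformly in $j$, hence an algorithm enumerating $D(A^+)$, i.e. a name of $A^+$ computed uniformly from the input. This step — pinning down the uniform modulus of continuity for $\ratpoly_j[A^+]$ and the uniform bound on its norm, so that the sup-approximation is genuinely effective in $j$ — is the only real content, and is where I expect to spend the most care; everything else is bookkeeping with codes.

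For the uniqueness clause, note that $A^+$ is determined as an \emph{object} (the algebra $C^*(X)$ with the specific listed distinguished vectors $\unit$ and $\vec{d}_{p_n}$) by $X^\#$ alone, not by the particular name chosen for $X^\#$; the presentation does not change, only the name we compute for it does. So if we run the algorithm above on two names of the same $X^\#$ (with, a priori, two different total boundedness functions), we obtain two names of the \emph{same} presentation $A^+$, and the identity map on $C^*(X)$ is trivially an isometric isomorphism $A^+ \to A^+$. To state this as "computably isometrically isomorphic via the identity map" one only needs that the identity is a computable map between the two presentations, which is immediate since they present the same structure with the same distinguished sequence; formally this is the analogue of the last line of the proof of Proposition \ref{prop:induced.comp}. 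I would close by remarking that the total boundedness function for $X^\#$ is genuinely needed as input — without computable compactness of $X^\#$ the norm $\norm{\ratpoly_j[A^+]}$ need not be computable — which is consistent with Theorem \ref{thm:main.2}.
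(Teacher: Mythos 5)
Your proposal is correct and follows essentially the same route as the paper: the paper's proof of Theorem \ref{thm:unif.2} is just the observation that the proof of Theorem \ref{thm:pres.c*} (i.e., the computability of the induced presentation, via the sup-over-a-net computation of $\norm{\ratpoly_j[A^+]}$) is fully uniform in a name of $X^\#$ and a total boundedness function, which is exactly the computation you spell out, and your treatment of the identity-map clause matches the intended argument since the induced presentation depends only on $X^\#$ and not on the chosen name.
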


We now turn to the necessity of the name for the unit as a parameter in the above.  
Suppose $A$ is a unital $C^*$ algebra.  We say that $A$ is \emph{computably unital}
if $\unit_A$ is a computable vector of every computable presentation of $A$.
We say that $A$ is \emph{uniformly computably unital} if 
there is an algorithm that given an index of a presentation $A^\#$ of $A$
computes an $A^\#$-index of $\unit_A$.  Our first step towards demonstrating the 
necessity of the unit is the following.

\begin{theorem}\label{thm:nucu}
There is a commutative unital $C^*$ algebra $A$ that is computably presentable but not uniformly 
computably unital.
\end{theorem}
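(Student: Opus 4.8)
The plan is to construct $A$ together with a uniformly computable sequence $(A^{\#_e})_{e\in\N}$ of computable presentations of $A$ and to arrange, for each $e$, that the $e$-th partial computable function $\Phi_e$ fails, on input an index of $A^{\#_e}$, to return an $A^{\#_e}$-index of $\unit_A$; since some $A^{\#_e}$ is then a computable presentation, $A$ is computably presentable, and since no $\Phi_e$ works for $A$, $A$ is not uniformly computably unital. (By Theorem \ref{thm:comp.unit} it is nevertheless computably unital, as it must be, so the example separates the two notions.) The algebra $A$ must be infinite dimensional: a finite dimensional commutative unital $C^*$ algebra is isomorphic to $\C^k$, where $\unit$ is the sum of the $k$ minimal projections, and one can uniformly search the rational vectors of any presentation for $\rho_1,\dots,\rho_k$ that are pairwise approximately orthogonal and are approximate projections of norm close to $1$; this forces the $\rho_i$ to approximate the $k$ minimal projections in some order, whence $\rho_1+\dots+\rho_k$ approximates $\unit$. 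Accordingly I would take $A=C(X)$ for a suitable infinite dimensional compact metric space $X$ (for concreteness, take $X=2^\omega$), so that there is ample room to hide the unit among the distinguished vectors.

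For the diagonalization I would use the recursion theorem to fix, in advance, an index $i_e$ of $A^{\#_e}$, so that the construction of $A^{\#_e}$ may run $\Phi_e(i_e)$ while it enumerates the diagram of $A^{\#_e}$. If $\Phi_e(i_e)$ never halts, the default behavior of the construction yields a genuine computable presentation of $A$ and $\Phi_e$ fails by divergence. If $\Phi_e(i_e)$ halts with output $j$, then for $j$ to be an $A^{\#_e}$-index of $\unit_A$ the machine coded by $j$ must produce a sequence $\ratpoly_{m_0},\ratpoly_{m_1},\dots$ of rational $*$-polynomials with $\norm{\ratpoly_{m_k}[A^{\#_e}]-\unit_A}<2^{-k}$ for all $k$. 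I would defeat this already at the first term: running $j$ far enough to read off which indeterminates occur in $\ratpoly_{m_0}$, say $x_0,\dots,x_M$, I would choose the still-undetermined distinguished vectors of $A^{\#_e}$ so that the ones indexed $0,\dots,M$ share a common zero in $X$; since a rational $*$-polynomial has no constant term, $\ratpoly_{m_0}[A^{\#_e}]$ then vanishes at that point, so $\norm{\ratpoly_{m_0}[A^{\#_e}]-\unit_A}\ge 1$ and $j$ is not a valid name. One then completes $A^{\#_e}$ with a dense tail, so that it is a genuine presentation of $A$.

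The hard part is reconciling this with the requirement that $A^{\#_e}$ be a genuine computable presentation of the fixed algebra $A$ no matter what $\Phi_e$ does: in the divergent case $A^{\#_e}$ must have a dense set of distinguished vectors, and no such set can have a common zero, so one cannot simply keep the distinguished vectors pinned to a common zero while waiting for $\Phi_e(i_e)$ to act. The resolution is a movable-marker / delayed-enumeration bookkeeping: one enumerates the diagram of $A^{\#_e}$ slowly, so that if $\Phi_e(i_e)$ halts it has consumed only a finite, \emph{unit-ambiguous} portion of the diagram---one consistent with the relevant indeterminates being interpreted by functions with a common zero---while all the genuinely unit-revealing data, and a dense tail, is held in reserve for indeterminates that $\Phi_e$ has not effectively used. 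A subtlety is that the output $j$ may refer to arbitrarily high indeterminates; but those are still undetermined when $j$ appears, so one is free to interpret them by functions vanishing at the chosen point before switching to the dense tail. Carrying out this bookkeeping uniformly in $e$, and verifying that each $A^{\#_e}$ is a legitimate computable presentation of $A$ on which $\Phi_e$ indeed fails, is the technical heart of the argument, and is the step I expect to be the main obstacle.
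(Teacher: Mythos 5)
Your outline follows the same architecture as the paper's proof: a recursion-theorem diagonalization in which the putative uniform procedure is run on an index of the very presentation being built, and its output is defeated by arranging that the distinguished vectors it effectively uses share a common zero, so that (rational $*$-polynomials having no constant term) the proposed approximant has distance at least $1$ from $\unit_A$. However, the step you explicitly defer as ``the technical heart \dots the main obstacle'' is precisely where all the content of the theorem lies, and as written it is a genuine gap. The difficulty is not the high-indexed indeterminates you address, but the low-indexed ones: by the stage at which $\Phi_e(i_e)$ halts and its output produces $\ratpoly_{m_0}$ in $x_0,\dots,x_M$, your construction has already committed finitely many diagram facts about $*$-polynomials in exactly these indeterminates (it must be enumerating the diagram of a genuinely dense generating sequence all along, to cover the divergent case). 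So you cannot treat $x_0,\dots,x_M$ as still undetermined; you must produce, computably, a reinterpretation of them by functions with a common zero under which every already-committed norm fact remains true, and the completed sequence must still present the same fixed algebra $A$. You assert that such ``unit-ambiguous'' bookkeeping can be arranged but give no mechanism, and for your choice $X=2^\omega$ no mechanism is indicated at all.

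The paper's proof is essentially the construction of such a mechanism: it takes $X=\bigcup_n J_n\cup\{0\}$ with the $J_n$ disjoint intervals accumulating at $0$, defaults to the monomial generators $\widehat{s}_n$, and uses the fact that the sup norms of any finite batch of $*$-polynomials in $\widehat{s}_0,\dots,\widehat{s}_m$ are already attained on $\bigcup_{m'\le \ell}J_{m'}\cup\{0\}$ for a computable $\ell$; hence multiplying the generators by $\Ind_{\ell'}$ for any $\ell'\ge\ell$ preserves those committed norms \emph{exactly}, while every generator (hence every polynomial in them) vanishes on the tail intervals, giving distance $1$ from $\unit_A$. Whether the truncated or untruncated interpretation is in force is decided only after seeing whether $\gamma(e)$ halts within the relevant delay, and a full monomial tail restores density. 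A $2^\omega$ version of your plan could likely be completed by an analogous norm-preserving truncation (e.g., killing the generators on a small cylinder that the finitely many committed polynomials do not need for their suprema), but some such device must be specified and verified; without it the proposal stops short of a proof.
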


\begin{proof}
For each $n \in \N$, let $J_n = [2^{-n} - 2^{-(n+2)}, 2^{-n} + 2^{-(n+2)}]$.  Let 
$X = \bigcup_n J_n \cup \{0\}$. 
Thus, $X$ has a computably compact presentation. 
Let $A = C^*(X)$. By Theorem \ref{thm:pres.c*}, $A$ is computably presentable.  

By way of contradiction, suppose $A$ is uniformly computably unital.  Thus, there is a partial
computable function $\psi$ so that for every $e \in \N$, 
if $e$ indexes a computable presentation $A^\#$ of 
$A$, then $\psi(e)$ is an $A^\#$-index of $\unit_A$.  This means that if $e$ indexes a computable
presentation $A^\# = (A, (v_n)_{n \in \N})$ of $A$, then for every $k \in \N$, 
\[
\norm{\ratpoly_{\phi_{\psi(e)}(k)}[A^\#] - \unit_A} < 2^{-k}.
\]
Let $\gamma(e) = \phi_{\psi(e)}(1)$, and let $\gamma_t(e) = \phi_{\psi_t(e),t}(1)$.

Let $s_n(t) = t^n$ for each $t \in [0, \frac{5}{4}]$, and let 
 $\widehat{s}_n = s_n |_X$.  Thus, $(\widehat{s}_n)_{n \in \N}$ is linearly dense in 
$A$.    Let $\rho_n$ denote the $n$-th rational vector of $(A, (\widehat{s}_n)_{n \in \N})$; 
in particular, we set $\rho_n = \ratpoly_n[(A, (\widehat{s}_n)_{n \in \N})]$.

Let $e \in \N$.  We define a sequence $(u^e_n)_{n \in \N}$ of vectors of $A$ as follows.  
If there is no $t \in \N$ so that $\gamma_t(e)\downarrow$ and so that 
$\norm{\rho_{\gamma(e)} - \unit_A} < 2^{-1}$, then let $u^e_n = \widehat{s}_n$ for all $n$.  
Otherwise, let $t_e$ be the least number so that $\gamma_{t_e}(e)\downarrow$.  
For each $n \in \N$, let $\Ind_n$ be the indicator function of $\bigcup_{m \leq n} J_m\ \cup\ \{0\}$.  
Let $k_e$ be the least number so that $\norm{\rho_{\gamma(e)}} = \norm{\rho_{\gamma(e)} \cdot \Ind_{k_e}}$.
Define
\[
u^e_n = \left\{
\begin{array}{cc}
\widehat{s}_n \cdot \Ind_{k_e + t_e} & n \leq \gamma(e) + t_e\\
\widehat{s}_{n - \gamma(e) - t_e - 1} & \mbox{otherwise}.\\
\end{array}
\right.
\]
For each $e \in \N$, let $G_e = (A, (u^e_n)_{n \in \N})$.  Thus, $G_e$ is a presentation of $A$.  

We claim that an index of $G_e$ can be computed from $e$.  
Specifically, let $f(e)$ be an index of the 
following procedure.  Given $k,n \in \N$, compute
$q \in \Q$ as follows.  First, compute the smallest $m$ so that each variable of $\ratpoly_n$ belongs to 
$\{x_0, \ldots, x_m\}$.  Then, compute $\ell \in \N$ so that 
$\norm{\ratpoly_n(\widehat{s}_0, \ldots, \widehat{s}_m)} = 
\norm{\ratpoly_n(\widehat{s}_0\cdot \Ind_\ell, \ldots, \widehat{s}_m\cdot \Ind_\ell)}$.  
If $\gamma_{\ell + m}(e)\uparrow$, then choose $q$ to be a rational number so that 
$|q - \norm{\ratpoly_n(\widehat{s}_0, \ldots, \widehat{s}_m)} | < 2^{-k}$.  
Otherwise, compute $q \in \Q$ so that $|q - \ratpoly_n(u^e_0, \ldots, u^e_m) | < 2^{-k}$. 

To verify that this procedure computes the norm on the rational vectors of $G_e$, 
it suffices to consider the case where $\gamma_{\ell + m}(e)\uparrow$.  
If $\gamma(e)\uparrow$, then $\widehat{s}_n = u^e_n$ for each $n \in \N$.  
So, suppose $\gamma(e)\downarrow$.  Then, $t_e > \ell + m$.  
Thus, $u^e_j = \widehat{s}_j\cdot \Ind_{k_e + t_e}$
for all $j \leq m$.  Hence, 
$\norm{\ratpoly_n(u^e_0, \ldots, u^e_m) } = \norm{\ratpoly_n(\widehat{s}_0, \ldots, \widehat{s}_m)}$.  

By the Recursion Theorem, there exists $e_0 \in \N$ so that $\phi_{e_0} = \phi_{f(e_0)}$.  
Therefore, $e_0$ indexes $G_{e_0}$.  Also, $\gamma(e_0)\downarrow$.  
 Let $p$ be the least number so that each variable that appears in $\ratpoly_{\gamma(e_0)}$ 
belongs to $\{x_0, \ldots, x_p\}$.  Thus, $p < \gamma(e_0)$.  
  It now follows that for each $j \leq p$, 
$u^{e_0}_j = \widehat{s}_j \cdot \Ind_{k_{e_0} + t_{e_0}}$.  Therefore, 
$\norm{\sum_{j \leq p} \alpha_j u^{e_0}_j - \unit_A} = 1$ which is a contradiction.
\end{proof}

We can now demonstrate the necessity of a name of the unit as a parameter.  
Moreover, we can show there is a particular space $X$ for which this parameter is necessary.

\begin{corollary}\label{cor:not.unif}
There is a a compact Polish space $X$ for which there is no algorithm that given an index of a presentation
$C^*(X)^\#$ produces indices of a presentation $X^\#$ and a total boundedness function for $X^\#$ so that 
$C^*(X)^\#$ is evaluative over $X^\#$.
\end{corollary}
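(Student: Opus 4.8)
The plan is to deduce the corollary from Theorem \ref{thm:nucu} by a uniformity argument. Take $X$ to be the space $X = \bigcup_n J_n \cup \{0\}$ constructed in the proof of Theorem \ref{thm:nucu}: it is a compact subset of $\R$, hence a compact Polish space, and by that theorem $A = C^*(X)$ is computably presentable but not uniformly computably unital. I would assume, toward a contradiction, that an algorithm $\mathcal{A}$ as in the statement exists for this particular $X$, and then show that $A$ is, after all, uniformly computably unital — the desired contradiction.

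So fix an index $e$ of an arbitrary computable presentation $A^\# = (A, (v_m)_{m \in \N})$ of $A$ and run $\mathcal{A}$ on $e$. This produces (indices of) a name of a presentation $X^\# = (X, d, (p_n)_{n \in \N})$ of $X$, a total boundedness function $h$ for $X^\#$, and the fact that $A^\#$ is evaluative over $X^\#$; in particular $X^\#$ is a computably compact presentation. Let $(\rho_j)_{j \in \N} = (\ratpoly_j[A^\#])_{j \in \N}$ enumerate the rational vectors of $A^\#$. Each $\rho_j$ is a computable point of $A^\#$ with a name computable from $j$, so by evaluativeness the map $p \mapsto \rho_j(p)$ is a computable function from $X^\#$ to $\C$, uniformly in $j$ — compose the evaluation functional of $(C^*(X) \times X)^\#$ with the computable map $p \mapsto (\rho_j, p)$. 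Since $\unit_A$ is the constant function with value $1$ on $X$, the map $p \mapsto \rho_j(p) - 1$ is likewise a computable function from $X^\#$ to $\C$, uniformly in $j$, and it has a modulus of continuity computable uniformly in $j$ because $X^\#$ is computably compact. Using that modulus to choose $\ell$, then using $h$ to cover $X$ by finitely many balls $B(p_m; 2^{-\ell})$ and maximizing $|\rho_j(p_m) - 1|$ over their centers, one computes $\norm{\rho_j - \unit_A}$ uniformly in $j$. All of this is uniform in $e$.

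Since the distinguished vectors of $A^\#$ are linearly dense in $A$, the rational vectors are dense and $\unit_A$ lies in their closure, so for each $k$ a routine search through $j$ (approximating $\norm{\rho_j - \unit_A}$ with enough precision) eventually locates some $j = \psi(e, k)$ with $\norm{\rho_{\psi(e,k)} - \unit_A} < 2^{-k}$. Packaging this search over $k$ yields, from $e$, an $A^\#$-index of $\unit_A$. As $e$ ranged over all indices of computable presentations of $A$, this shows $A$ is uniformly computably unital, contradicting Theorem \ref{thm:nucu}; hence no algorithm $\mathcal{A}$ exists for this $X$.

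The step that needs the most care — and the one place a naive approach breaks down — is the decision not to route the argument through the presentation of $C^*(X)$ induced by $X^\#$ (which Theorem \ref{thm:unif.2} would hand us uniformly from $X^\#$ and $h$): transferring $\unit_A$ from that presentation back to $A^\#$ along the identity map would require knowing in advance that $\unit_A$ is a computable vector of $A^\#$, reintroducing exactly the non-uniformity of Theorem \ref{thm:comp.unit} that the corollary concerns. Working instead directly with the supremum norm $\norm{\rho_j - \unit_A}$ over the computably compact space $X$ sidesteps this, and it is here that evaluativeness together with the total boundedness function supplied by $\mathcal{A}$ are exactly what is needed; in particular that total boundedness function cannot be dropped from the hypothesis in this argument. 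The remaining obligation — verifying that every step above is uniform in the index $e$ — is routine.
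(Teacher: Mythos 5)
Your outer strategy coincides with the paper's: take the space $X$ from Theorem \ref{thm:nucu}, assume the algorithm exists, and derive that $C^*(X)$ is uniformly computably unital. The gap is in the inner mechanism, at the sentence ``All of this is uniform in $e$.'' Evaluativeness of $C^*(X)^\#$ over $X^\#$ is only the assertion that the evaluation functional \emph{is} computable; the hypothesized algorithm outputs indices of $X^\#$ and of a total boundedness function, but it does not output an index of any machine computing the evaluation functional, and there is no effective passage from the promise ``this map is computable'' to an index witnessing that. Hence your computation of $p \mapsto \rho_j(p)$, of a modulus of continuity for it, and of $\norm{\rho_j - \unit_A} = \max_{p \in X}\lvert \rho_j(p) - 1\rvert$ is available for each fixed $e$ (with a machine depending non-effectively on $e$) but not uniformly in $e$ --- and uniformity in $e$ is exactly what the contradiction with Theorem \ref{thm:nucu} requires, since that theorem only rules out a single algorithm working for all indices $e$. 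Note also that uniform access to evaluation of the rational vectors of $C^*(X)^\#$ at points of $X^\#$ is essentially the same kind of information as a uniformly given $C^*(X)^\#$-name of $\unit_A$; so your route does not sidestep the non-uniformity of Theorem \ref{thm:comp.unit}, it relocates it from the identity-map transfer to the evaluation map, without justification.

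For comparison, the paper never evaluates vectors of $C^*(X)^\#$ at all: from the indices of $X^\#$ and the total boundedness function it applies Theorem \ref{thm:unif.2} to compute, uniformly, an index of the presentation of $C^*(X)$ \emph{induced} by $X^\#$, whose distinguished vectors are $\unit_A$ and the functions $\vec{d}_{p_n}$ --- their values at points of $X$ are given by explicit formulas from the metric data, which is what makes that step genuinely uniform --- and the unit is by definition the $0$-th distinguished vector of that presentation; Proposition \ref{prop:induced.comp} is then invoked to relate the induced presentation to $C^*(X)^\#$ via the identity map. Your closing paragraph correctly identifies that this last transfer is the delicate point, but replacing it by a bare appeal to evaluativeness does not supply the missing uniformity. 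A repaired write-up must either follow the paper's route and address the transfer, or else extract from the hypothesized algorithm some uniformly indexed computational link between $C^*(X)^\#$ and $X^\#$; the mere fact of evaluativeness provides no such link.
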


\begin{proof}
By Theorem \ref{thm:nucu}, there is a compact Polish space $X$ so that $C^*(X)$ is 
computably presentable but not 
uniformly computably unital.  By Theorem \ref{thm:unif.2}, from an index of a presentation $X^\#$ 
and an index of a total boundedness function for $X^\#$, it is possible to compute an index of the 
presentation induced by $X^\#$.  By definition, the unit of $C^*(X)$ is the 
$0$-th distinguished vector of the presentation induced by any presentation $X^\#$.  By Proposition 
\ref{prop:induced.comp}, 
if $C^*(X)^\#$ is evaluative over $X^\#$, and if $C^*(X)^+$ is the presentation of $C^*(X)$ induced by 
$X^\#$,  then the identity map is a computable isometric isomorphism of $C^*(X)^+$ to $C^*(X)^\#$.  
The corollary follows. 
\end{proof}

\section{Applications}\label{sec:appl}

\subsection{Computing spatial realizations and composition operators}

In this section, we assume $X_0$ and $X_1$ are compact Polish spaces.
We also assume 
$C^*(X_j)^\#$ and $X_j^\# = (X_j, d^j, (p_{j,n})_{n \in \N})$ are 
computable presentations of $C^*(X_j)$ and $X_j$ respectively.  
Further, we assume $C^*(X_j)^\#$ is evaluative over $X_j^\#$.

\begin{theorem}[Effective Banach-Stone Theorem for $C^*(X)$]\label{thm:banach.stone}
Let 
$T$ be a computable isometric isomorphism of $C^*(X_0)^\#$ to 
$C^*(X_1)^\#$.  Then, the spatial realization of $T$ is a computable homeomorphism
of $X_1^\#$ to $X_0^\#$.  
\end{theorem}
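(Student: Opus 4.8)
The plan is to write $\psi$ for the spatial realization of $T$, so that $\psi\colon X_1 \to X_0$ is the unique homeomorphism with $T(f) = f \circ \psi$ for all $f \in C^*(X_0)$. Since $T$ is a computable isometric isomorphism of $C^*(X_0)^\#$ onto $C^*(X_1)^\#$, the inverse $T^{-1}$ is a computable isometric isomorphism of $C^*(X_1)^\#$ onto $C^*(X_0)^\#$, and a direct check shows its spatial realization is $\psi^{-1}$. The hypotheses on the two pairs $(C^*(X_j)^\#, X_j^\#)$ are symmetric in $j$, so it suffices to prove that $\psi$ is a computable map of $X_1^\#$ into $X_0^\#$; applying this with the roles of $X_0$ and $X_1$ interchanged then shows $\psi^{-1}$ is a computable map of $X_0^\#$ into $X_1^\#$, and hence $\psi$ is a computable homeomorphism of $X_1^\#$ onto $X_0^\#$.

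To compute $\psi$ I would use the metric test functions on $X_0$. Since $C^*(X_0)^\#$ is evaluative over $X_0^\#$, evaluating at the distinguished point $p_{0,n}$ shows that $\widehat{p_{0,n}}$ is a computable functional of $C^*(X_0)^\#$ uniformly in $n$, so by Theorem~\ref{thm:eval.2} the vector $g_n := \vec{d^0}_{p_{0,n}}$ is a computable vector of $C^*(X_0)^\#$ uniformly in $n$. As $T$ is computable, $T(g_n)$ is then a computable vector of $C^*(X_1)^\#$ uniformly in $n$. The key identity is that, for every $q \in X_1$,
\[
T(g_n)(q) = (g_n \circ \psi)(q) = d^0\bigl(p_{0,n}, \psi(q)\bigr).
\]
Because $C^*(X_1)^\#$ is evaluative over $X_1^\#$, from a name of a point $q$ of $X_1^\#$ one can compute $T(g_n)(q) = d^0(p_{0,n},\psi(q))$ to arbitrary precision, uniformly in $n$. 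In other words, from a name of $q$ one computes the sequence of distances from $\psi(q)$ to all the distinguished points of $X_0^\#$.

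It then remains to recover $\psi(q)$ as a point of $X_0^\#$ from these distances. Given $k$, I would search $n = 0, 1, 2, \dots$, at each stage computing $d^0(p_{0,n},\psi(q))$ accurately enough either to confirm $d^0(p_{0,n},\psi(q)) < 2^{-(k+2)}$ and halt, or else to continue; since the distinguished points of $X_0^\#$ are dense --- and, by Theorem~\ref{thm:main.2}, $X_0^\#$ is even computably compact --- some such $n$ exists and is reached in finitely many steps. The distinguished points produced this way form a fast Cauchy sequence converging to $\psi(q)$, hence a name of $\psi(q)$; the whole procedure is uniform in the given name of $q$, so $\psi$ is a computable map of $X_1^\#$ into $X_0^\#$, which completes the argument.

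The substance of the proof is in the choice of test vectors: the distance functions $\vec{d^0}_{p_{0,n}}$ are genuine computable vectors of $C^*(X_0)^\#$ by Theorem~\ref{thm:eval.2}, and their $T$-images, evaluated at $q$ via the evaluativeness of $C^*(X_1)^\#$, encode exactly the metric location of $\psi(q)$ in $X_0$; once this is recognized, recovering $\psi(q)$ from its distances to a dense set is routine. I do not expect a real obstacle here --- the one step warranting care is the verification that $T^{-1}$ is again a computable isometric isomorphism with spatial realization $\psi^{-1}$, which is what makes the argument symmetric.
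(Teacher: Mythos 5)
Your proposal is correct and follows essentially the same route as the paper: the key step in both is that the distance functions $\vec{d^0}_{p_{0,n}}$ are computable vectors of $C^*(X_0)^\#$ by Theorem~\ref{thm:eval.2}, and evaluativeness of $C^*(X_1)^\#$ over $X_1^\#$ turns the identity $T(\vec{d^0}_{p_{0,n}})(q) = d^0(p_{0,n},\psi(q))$ (the paper phrases this as $\widehat{q}\circ T = \widehat{\psi(q)}$) into an $X_0^\#$-name of $\psi(q)$. Your additional observation that $T^{-1}$ is computable, so the argument applies symmetrically to give computability of $\psi^{-1}$, is a correct and worthwhile supplement that the paper leaves implicit.
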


\begin{proof}
Let $A_j = C^*(X_j)$, and let $\psi : X_1 \rightarrow X_0$ denote the spatial realization of $T$.  

Suppose a name $\Lambda$ of $p \in X_1$ is given.  Since 
$X_1^\#$ is induced by $C^*(X_1)^\#$, from $\Lambda$ we can compute 
$\widehat{p}$ (that is, from $\Lambda$ and a $A_1^\#$-name of $f$ we can compute
a name of 
$\widehat{p}(f)$).  Thus, we can also compute $\widehat{p} \circ T$.  However, 
$\widehat{p} \circ T = \widehat{\psi(p)}$.   Let $g_n(q) = \vec{d^0}_{p_{0,n}}$.
By Theorem \ref{thm:eval.2}, $g_n$ is a computable vector of $A_0^\#$
uniformly in $n$.  Therefore, from $\Lambda$, $n$ we can compute 
$\widehat{\psi(p)}(g_n) = d_0(\psi(p), p_{0,n})$.   It follows that from $\Lambda$
we can compute an $X_0^\#$-name of $\psi(p)$.
\end{proof}

\begin{proposition}\label{prop:comp.comp.op}
  Suppose $\psi$ is a computable map of 
$X_1^\#$ to $X_0^\#$.  Then, the composition map $f \mapsto f \circ \psi$ is 
a computable map of $C^*(X_0)^\#$ to $C^*(X_1)^\#$.
\end{proposition}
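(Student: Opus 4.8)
The plan is to exploit the fact that $C^*(X_1)^\#$ is evaluative over $X_1^\#$ together with the density of the algebra generated by the coordinate functions $\vec{d^1}_{p_{1,n}}$ and the unit, and to push the computable structure of $\psi$ through the evaluation map. First I would recall from Theorem~\ref{thm:eval.2} (and Proposition~\ref{prop:induced.comp}) that, since $C^*(X_j)^\#$ is evaluative over $X_j^\#$, it is computably isometrically isomorphic via the identity to the presentation of $C^*(X_j)$ induced by $X_j^\#$; in particular $X_j^\#$ is computably compact, and the distinguished vectors of $C^*(X_j)^\#$ are (up to this identification) the unit together with the functions $\vec{d^j}_{p_{j,n}}$. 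So it suffices to work with these induced presentations.

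The key step is to show that the composition map is computable on a linearly dense set of vectors, with a uniform modulus of continuity, and then invoke the standard extension principle from computable analysis (see \cite{Brattka.Hertling.2021}). Concretely, for the distinguished vector $\vec{d^0}_{p_{0,n}} \in C^*(X_0)$, its image under composition with $\psi$ is the function $q \mapsto d^0(\psi(q), p_{0,n})$ on $X_1$. Since $\psi$ is a computable map of $X_1^\#$ to $X_0^\#$, from a name of $q \in X_1$ we can compute a name of $\psi(q) \in X_0$, and then, using that $X_0^\#$ is (computably) presented, compute $d^0(\psi(q), p_{0,n})$; hence $\vec{d^0}_{p_{0,n}} \circ \psi$ is a computable element of $C^*(X_1)^\#$ uniformly in $n$, because evaluation and the computable compactness of $X_1^\#$ let us compute its norm and its distances to the rational vectors of $C^*(X_1)^\#$ uniformly. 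The unit maps to the unit, which is a distinguished vector. Since composition with $\psi$ is linear, multiplicative, involution-preserving, and norm-nonincreasing (indeed norm-preserving when $\psi$ is onto, but we only need $\norm{f \circ \psi} \leq \norm{f}$), it maps rational $*$-polynomials in the distinguished vectors of $C^*(X_0)^\#$ to the corresponding rational $*$-polynomials in their images, which we have just shown are computable vectors of $C^*(X_1)^\#$ uniformly; thus composition is computable on the rational vectors of $C^*(X_0)^\#$ uniformly. The inequality $\norm{f\circ\psi - g\circ\psi} = \norm{(f-g)\circ\psi} \leq \norm{f-g}$ shows the identity on $\N$ is a modulus of uniform continuity, so the map extends to a computable map on all of $C^*(X_0)^\#$.

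The main obstacle I anticipate is the bookkeeping needed to legitimately replace $C^*(X_j)^\#$ by the induced presentation: one must check that "computable map of $C^*(X_0)^\#$ to $C^*(X_1)^\#$" is insensitive to passing along the identity computable isometric isomorphisms supplied by Proposition~\ref{prop:induced.comp}, and that the evaluativeness of $C^*(X_1)^\#$ over $X_1^\#$, combined with the computable compactness of $X_1^\#$, really does make $\norm{h}$ and the distances $\norm{h - \rho_m}$ computable uniformly for $h = \vec{d^0}_{p_{0,n}} \circ \psi$ and $\rho_m$ ranging over the rational vectors — this is exactly the kind of argument already carried out in the proof of Theorem~\ref{thm:eval.2}, so I would cite that pattern rather than reprove it. Everything else is a routine application of the extension-by-continuity principle for computable functions between computable metric spaces.
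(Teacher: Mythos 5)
Your argument is correct, but it takes a more roundabout route than the paper's. You first pass to the induced presentations via Theorem~\ref{thm:eval.2} and Proposition~\ref{prop:induced.comp}, show that the generator images $q \mapsto d^0(\psi(q), p_{0,n})$ are computable vectors of $C^*(X_1)^\#$ uniformly in $n$ (via evaluativeness of $C^*(X_1)^\#$ plus computable compactness of $X_1^\#$), and then propagate to all rational vectors through the $*$-homomorphism property and extend by continuity using $\norm{T}\leq 1$. The paper's proof skips the reduction and the algebraic bookkeeping entirely: for an \emph{arbitrary} rational vector $f$ of $C^*(X_0)^\#$ and any rational vector $g$ of $C^*(X_1)^\#$, the function $p \mapsto |g(p) - f(\psi(p))|$ is computable on $X_1^\#$ directly, because evaluativeness of $C^*(X_0)^\#$ over $X_0^\#$ (which you only use indirectly, to identify $C^*(X_0)^\#$ with its induced presentation) lets one compute $f(\psi(p))$ without decomposing $f$ into a polynomial in the distance functions; computable compactness of $X_1^\#$ then makes $\norm{g - f\circ\psi}$ computable uniformly in $g$, and a search produces a $2^{-k}$-approximation to $T(f)$. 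So the computational engine is the same in both proofs -- evaluation plus computable compactness yields computable sup-norm distances -- but the paper applies it in one shot, while your version buys modularity (it reuses Proposition~\ref{prop:induced.comp} and isolates the role of the generators) at the cost of having to verify that the identity isomorphisms with the induced presentations are computable in both directions, a point you rightly flag but should make explicit if you write this up.
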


\begin{proof}
Let $A_j = C^*(X_j)$.  For all $f \in A_0$, let $T(f) = f \circ \psi$. 

Since $\norm{T} \leq 1$, it suffices to show $T$ is computable on the rational
vectors of $A_0^\#$.  Let $f$ be a rational vector of $A_0^\#$, and let 
$k \in \N$.  For each rational vector $g$ of $A_1^\#$, let 
$\sigma_g(p) = |g(p) - f(\psi(p))|$ for all $p \in X_1$.  Since $A_j^\#$ is evaluative over  
$X_j^\#$, it follows that $\sigma_g$ is a computable map from $A_1^\#$ to $\R$ uniformly
in $g$.  As $X_1^\#$ is computably compact, we can also conclude that 
$\norm{g - f\circ \psi} = \max_{p \in X_1} \sigma_g(p)$ is computable uniformly in $g$.
Thus, by a search procedure, we can compute a rational vector $g$ of $A_0^\#$
so that $\norm{g - T(f)} < 2^{-k}$.
\end{proof}

\subsection{Computable categoricity and related topics}

The following is immediate from Theorem \ref{thm:banach.stone} and 
Proposition \ref{prop:comp.comp.op}.

\begin{corollary}\label{cor:comp.cat.main}
Suppose $X$ is a compact Polish space.  Then, the following are equivalent.
\begin{enumerate}
	\item $C^*(X)$ is computably categorical.
	
	\item Any two computably compact presentations of $X$ are 
	computably homeomorphic.
\end{enumerate}
\end{corollary}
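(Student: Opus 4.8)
The plan is to establish the two implications separately, using the two preceding results together with the existence/uniqueness machinery of Section \ref{sec:proofs.main}. For the direction $(2) \Rightarrow (1)$, suppose $A^{\#_1}$ and $A^{\#_2}$ are two computable presentations of $A = C^*(X)$. By Theorem \ref{thm:main.1}, each $A^{\#_i}$ is evaluative over some computable presentation $X^{\#_i}$ of $X$, and by Theorem \ref{thm:main.2} each $X^{\#_i}$ is computably compact. By hypothesis $(2)$, there is a computable homeomorphism $\psi : X^{\#_2} \to X^{\#_1}$. Proposition \ref{prop:comp.comp.op} then gives that $f \mapsto f \circ \psi$ is a computable map $A^{\#_1} \to A^{\#_2}$; since $\psi$ is a homeomorphism, this map is an isometric isomorphism (its inverse is composition with $\psi^{-1}$, also computable by the same proposition applied to $\psi^{-1}$, which is computable because $\psi$ is a computable homeomorphism of computably compact spaces). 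Hence $A^{\#_1}$ and $A^{\#_2}$ are computably isometrically isomorphic, so $A$ is computably categorical.

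For the direction $(1) \Rightarrow (2)$, suppose $C^*(X)$ is computably categorical, and let $X^{\#_0}$ and $X^{\#_1}$ be two computably compact presentations of $X$. By Theorem \ref{thm:main.3} (or Theorem \ref{thm:pres.c*} together with Lemma \ref{lm:eval.1}), there is a computable presentation $C^*(X)^{\#_j}$ evaluative over $X^{\#_j}$ for each $j$. By computable categoricity, there is a computable isometric isomorphism $T : C^*(X)^{\#_0} \to C^*(X)^{\#_1}$. By the Effective Banach--Stone Theorem (Theorem \ref{thm:banach.stone}), the spatial realization $\psi$ of $T$ is a computable homeomorphism of $X^{\#_1}$ to $X^{\#_0}$. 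This is exactly what $(2)$ asserts.

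The main obstacle is purely bookkeeping: one must check that the hypotheses of Theorem \ref{thm:banach.stone} and Proposition \ref{prop:comp.comp.op} — namely that the $C^*(X)^{\#}$ are evaluative over computably compact $X^{\#}$ — are genuinely available in each direction, and that the presentations produced by Theorems \ref{thm:main.1}--\ref{thm:main.3} can be fed into those results without a mismatch of conventions (e.g. that ``computably compact presentation'' in the statement of $(2)$ matches the presentations of $X$ that arise as the base over which the evaluative presentation lives). Since Theorem \ref{thm:main.2} converts evaluativeness into computable compactness and Theorem \ref{thm:main.3} produces an evaluative presentation over any computably compact base, these line up, and the corollary follows with no further work.
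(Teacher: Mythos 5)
Your proposal is correct and follows essentially the same route as the paper, which simply declares the corollary immediate from Theorem \ref{thm:banach.stone} and Proposition \ref{prop:comp.comp.op}; your write-up just makes explicit the bookkeeping via Theorems \ref{thm:main.1}--\ref{thm:main.3} that the paper leaves implicit.
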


A. Fox has shown that $C^*([0,1])$ is not computably categorical \cite{Fox.2022+}.  We thus obtain the following.

\begin{corollary}\label{cor:pres.C01}
There exist computably compact presentations of $[0,1]$ that are not computably 
homeomorphic.
\end{corollary}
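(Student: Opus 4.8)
The plan is to combine Corollary~\ref{cor:comp.cat.main} with Fox's result that $C^*([0,1])$ is not computably categorical. Since $[0,1]$ is a compact Polish space, Corollary~\ref{cor:comp.cat.main} applies with $X = [0,1]$, giving the equivalence: $C^*([0,1])$ is computably categorical if and only if any two computably compact presentations of $[0,1]$ are computably homeomorphic. Fox has shown that the left-hand statement fails, so the right-hand statement must fail as well; that is, there exist two computably compact presentations of $[0,1]$ that are not computably homeomorphic.

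More explicitly, I would argue by contraposition. Suppose, toward a contradiction, that every pair of computably compact presentations of $[0,1]$ were computably homeomorphic. Then by the implication $(2)\Rightarrow(1)$ of Corollary~\ref{cor:comp.cat.main}, $C^*([0,1])$ would be computably categorical, contradicting Fox's theorem \cite{Fox.2022+}. Hence there must be computably compact presentations of $[0,1]$ that are not computably homeomorphic, which is exactly the assertion of Corollary~\ref{cor:pres.C01}.

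There is essentially no obstacle here: the corollary is a one-line deduction once Corollary~\ref{cor:comp.cat.main} and Fox's non-categoricity result are in hand. The only point worth checking is that $[0,1]$ legitimately satisfies the hypothesis ``$X$ is a compact Polish space'' required by Corollary~\ref{cor:comp.cat.main}, which is immediate. So the proof is just: apply Corollary~\ref{cor:comp.cat.main} to $X=[0,1]$, invoke Fox's result to negate clause~(1), and read off the negation of clause~(2).

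\begin{proof}
By Corollary~\ref{cor:comp.cat.main} applied to $X = [0,1]$, the space $C^*([0,1])$ is computably categorical if and only if any two computably compact presentations of $[0,1]$ are computably homeomorphic. Since $C^*([0,1])$ is not computably categorical \cite{Fox.2022+}, the latter condition fails; that is, there exist computably compact presentations of $[0,1]$ that are not computably homeomorphic.
\end{proof}
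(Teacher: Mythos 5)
Your proposal is correct and is exactly the paper's intended argument: the paper derives this corollary immediately from Corollary \ref{cor:comp.cat.main} applied to $X=[0,1]$ together with Fox's theorem that $C^*([0,1])$ is not computably categorical. Nothing is missing.
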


\begin{corollary}\label{cor:C01.eval}
There is a computable presentation of $C^*([0,1])$ that is not evaluative over 
the standard presentation of $[0,1]$.
\end{corollary}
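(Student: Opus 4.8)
The plan is to derive Corollary \ref{cor:C01.eval} from Corollary \ref{cor:pres.C01} together with the rigidity results already established. By Corollary \ref{cor:pres.C01}, fix two computably compact presentations $X^\#$ and $X^+$ of $[0,1]$ that are \emph{not} computably homeomorphic. By Theorem \ref{thm:pres.c*}, there is a computable presentation $C^*([0,1])^\#$ satisfying conditions (1) and (2) of that theorem with respect to the distinguished points of $X^\#$; by Lemma \ref{lm:eval.1}, this presentation is evaluative over $X^\#$. I claim $C^*([0,1])^\#$ cannot be evaluative over the standard presentation $[0,1]_{\mathrm{std}}$ of $[0,1]$.

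Suppose for contradiction that it were. Then $\widehat{q_n}$ is a computable functional of $C^*([0,1])^\#$ uniformly in $n$, where $(q_n)_{n \in \N}$ are the distinguished points of $[0,1]_{\mathrm{std}}$ (this is part of the definition of evaluativeness, since evaluation on distinguished points is in particular computable). Since $C^*([0,1])^\#$ is also evaluative over $X^\#$, the same holds for the distinguished points $(p_n)$ of $X^\#$. Now apply Theorem \ref{thm:eval.2}: taking the presentation $C^*([0,1])^\#$ and the presentation $X^\#$ in the role of ``$X^\#$'' there, part (2) of that theorem — with $[0,1]_{\mathrm{std}}$ in the role of ``$X^+$'', which is computably compact and over which $\vec{d}_{q_n}$ is a computable vector of $C^*([0,1])^\#$ uniformly in $n$ by evaluativeness — yields that $X^\#$ and $[0,1]_{\mathrm{std}}$ are computably homeomorphic. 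By the same argument with $X^+$ in place of $X^\#$ (here I need $C^*([0,1])^\#$ to also be evaluative over $X^+$, which is \emph{not} automatic, so a cleaner route is needed — see below), we would get $X^+$ computably homeomorphic to $[0,1]_{\mathrm{std}}$, hence to $X^\#$, contradicting the choice of $X^\#, X^+$.

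To make the second half go through cleanly, the better framing is this: by Corollary \ref{cor:comp.cat.main}, since there are two computably compact presentations of $[0,1]$ that are not computably homeomorphic, $C^*([0,1])$ is not computably categorical. Hence there exist two computable presentations $A^{\#_0}, A^{\#_1}$ of $C^*([0,1])$ that are not computably isometrically isomorphic. By Theorem \ref{thm:main.3}, the computable presentation of $C^*([0,1])$ that is evaluative over $[0,1]_{\mathrm{std}}$ is unique up to computable isometric isomorphism; therefore at least one of $A^{\#_0}, A^{\#_1}$ — say $A^{\#_0}$ — is not evaluative over $[0,1]_{\mathrm{std}}$. That $A^{\#_0}$ is the desired presentation.

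The only real subtlety is ensuring that Theorem \ref{thm:main.3} applies: it requires $[0,1]_{\mathrm{std}}$ to be a computably compact presentation of a Polish space, which is standard. Everything else is bookkeeping: invoking Corollary \ref{cor:comp.cat.main} to pass from ``two non-homeomorphic computably compact presentations of $[0,1]$'' (Corollary \ref{cor:pres.C01}) to ``$C^*([0,1])$ is not computably categorical,'' and then invoking the uniqueness clause of Theorem \ref{thm:main.3}. I expect no genuine obstacle — the content is entirely packaged in the cited results — so the proof is a short deduction.
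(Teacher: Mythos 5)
Your final (``cleaner'') argument is correct and is essentially the paper's intended deduction: the standard presentation of $[0,1]$ is computably compact, so the uniqueness clause of Theorem \ref{thm:main.3} forces any two computable presentations of $C^*([0,1])$ that are evaluative over it to be computably isometrically isomorphic, which fails since $C^*([0,1])$ is not computably categorical. Your detour through Corollary \ref{cor:pres.C01} and Corollary \ref{cor:comp.cat.main} merely re-derives Fox's non-categoricity result (which the paper cites directly), and your first, self-admittedly gapped route via Theorem \ref{thm:eval.2} can simply be discarded.
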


A. Melnikov has shown that the \emph{metric space} $2^\omega$ is 
computably categorical \cite{Melnikov.2013}; that is, any two computable presentations of this metric 
space are computably \emph{isometric}.  
We augment this result as follows.

\begin{proposition}\label{prop:comp.cat.cantor}
Any two computably compact presentations of $2^\omega$ are computably homeomorphic.
\end{proposition}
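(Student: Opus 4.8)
The plan is to deduce this from Corollary~\ref{cor:comp.cat.main}: that corollary equates ``any two computably compact presentations of $2^\omega$ are computably homeomorphic'' with ``$C^*(2^\omega)$ is computably categorical,'' so it suffices to prove the latter. Fix two computable presentations $A^{\#_1}$, $A^{\#_2}$ of $A := C^*(2^\omega)$ and aim to build a computable isometric isomorphism between them.

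The main step is to extract from each $A^{\#_j}$ an \emph{honestly computable} copy of the Boolean algebra of projections of $A$. Recall that $C(2^\omega;\C)$ is the closed linear span of its projections (which, being the indicator functions of clopen sets, are uniformly dense among all of $C(2^\omega)$ by zero-dimensionality), and that these projections, with $p\wedge q=pq$, $p\vee q=p+q-pq$, $\neg p=\unit_A-p$, form a Boolean algebra isomorphic to the countable atomless Boolean algebra $\mathcal{B}_\eta$. One cannot decide whether a rational vector of $A^{\#_j}$ is a projection, but one can round near-projections: searching over the self-adjoint rational vectors $\rho'=\tfrac12(\rho+\rho^*)$ for those with $\|\rho'^2-\rho'\|$ provably below a small fixed constant (a c.e.\ condition read off the diagram), the spectrum of any such $\rho'$ misses a fixed neighbourhood of $\tfrac12$, so the spectral projection $\chi_{[1/2,\infty)}(\rho')$ is a genuine projection of $A$ that is computable from $\rho'$; moreover every projection of $A$ lies within the chosen tolerance of some such $\rho'$, hence is produced. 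This yields an enumeration $(p_n)_{n\in\N}$ of \emph{all} projections of $A$, each with a fast Cauchy $A^{\#_j}$-name. The point that makes the resulting structure computable rather than merely $\Pi^0_1$ is that $A$ is \emph{commutative}: distinct projections $p\ne q$ satisfy $\|p-q\|=1$, and $p\not\le q$ forces $\|pq-p\|=1$; so from the names, equality and the order among the $p_n$ are decidable and the Boolean operations are computable. Thus $\mathcal{B}(A^{\#_j}):=\big((p_n)_n,\wedge,\vee,\neg\big)$ is a computable Boolean algebra, and being the clopen algebra of $2^\omega$ it is a computable copy of $\mathcal{B}_\eta$.

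The rest is routine. As $\mathcal{B}_\eta$ is computably categorical, there is a computable isomorphism $\Phi:\mathcal{B}(A^{\#_1})\to\mathcal{B}(A^{\#_2})$; by Stone duality it is induced by a self-homeomorphism of $2^\omega$, hence extends to an isometric $*$-isomorphism $T:A\to A$ restricting to $\Phi$ on projections. And $T$ is computable from $A^{\#_1}$ to $A^{\#_2}$: given an $A^{\#_1}$-name of $f$ and $\varepsilon>0$, search for a rational linear combination $\sum_i\alpha_i p_{n_i}$ with $\|f-\sum_i\alpha_i p_{n_i}\|<\varepsilon$ (such exist, since rational linear combinations of the $p_n$ are dense in $A$) and output $\sum_i\alpha_i\,\Phi(p_{n_i})$, which is within $\varepsilon$ of $T(f)$ since $T$ is isometric. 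Letting $\varepsilon\to0$ produces an $A^{\#_2}$-name of $T(f)$. Hence $A$ is computably categorical, and the proposition follows.

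The hard part is the middle paragraph: obtaining an honestly computable copy of $\mathcal{B}_\eta$ out of an arbitrary computable presentation of $A$. Everything there hinges on the uniform gap $\|p-q\|=1$ between distinct projections in the commutative setting — this is what upgrades the a priori $\Pi^0_1$ equality and order relations to decidable ones — together with carrying out the near-projection rounding uniformly enough that no projection is missed. Once this computable copy is in hand, one is simply invoking the classical computable categoricity of the countable atomless Boolean algebra. (Alternatively, if computable categoricity of $C^*(2^\omega)$ is already available in the literature, the first sentence finishes the proof.)
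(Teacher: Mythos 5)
Your proposal is correct, but it takes a genuinely different route from the paper. The paper proves the proposition directly and purely topologically: given a computably compact presentation $X^\#$ of $2^\omega$, it builds a computable sequence of finite partitions of $X^\#$ into clopen sets coming from rational balls with positive formal gaps, refines them stage by stage, matches them with cylinder sets $[\sigma]$, and reads off a computable homeomorphism from the standard presentation of $2^\omega$ to $X^\#$; the corollary that $C^*(2^\omega)$ is computably categorical is then deduced afterwards from this proposition together with Corollary \ref{cor:comp.cat.main}. You reverse that order: you prove computable categoricity of $C^*(2^\omega)$ directly, by rounding near-projections via continuous functional calculus (the spectral-gap argument from $\norm{\rho'^2-\rho'}$ small is sound, and the rounding is computable since a bound on the spectrum is computable and the cutoff function is effectively polynomially approximable), exploiting the distance-$1$ separation of distinct projections in the commutative setting to get decidable equality and order, hence a computable copy of the countable atomless Boolean algebra, and then invoking its classical computable categoricity plus Stone duality and a density argument to build the computable isometric isomorphism; finally you transfer back through Corollary \ref{cor:comp.cat.main}. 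This is not circular, since that corollary rests only on Theorem \ref{thm:banach.stone} and Proposition \ref{prop:comp.comp.op}, not on the proposition being proved. What each approach buys: the paper's argument is self-contained and elementary, exhibiting the homeomorphism directly and needing no facts about Boolean algebras; yours outsources the back-and-forth to the known categoricity of the atomless Boolean algebra and yields Corollary \ref{cor:2.omega} as the primary result rather than a consequence, at the cost of heavier $C^*$-machinery. Two small points you should make explicit: the recovery of \emph{every} projection requires noting that the rounded projection is within distance less than $1$ of the target and hence equal to it (again by the commutative distance-$1$ gap), and the direction bookkeeping between $\Phi$ and the spatial map $\psi$ in $T(f)=f\circ\psi$ should be stated, though neither affects correctness.
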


\begin{proof}
Let $X = 2^\omega$.  Let $d_0$ denote the standard metric on $2^\omega$.  
For each $\sigma \in 2^{< \omega}$, let 
$[\sigma] = \{f \in 2^\omega\ :\ \sigma \subset f\}$, and let 
$\sigma\uparrow = \{\tau \in 2^{< \omega}\ :\ \sigma \subseteq \tau\}$.

Suppose $X^\# = (2^\omega, d, (p_n)_{n \in \N})$ is a computably compact presentation.
It suffices to show that $X^\#$ is a computably homeomorphic to the standard presentation of $2^\omega$.

We define a computable sequence $(h_s)_{s \in \N}$ of maps that satisfy the following conditions.
\begin{enumerate}
    \item The domain of $h_s$ is a finite covering of $2^\omega$ by pairwise disjoint open rational balls
    of $2^\omega$ whose diameters do not exceed $2^{-s}$.\label{c1}

    \item $\dom(h_{s+1})$ is a refinement of $\dom(h_s)$.  That is, for each $B \in \dom(h_{s+1})$ 
    there exists $B' \in \dom(h_s)$ so that $B \subseteq B'$.\label{c2}

    \item $\ran(h_s)$ is a finite covering of $2^\omega$ by pairwise disjoint open rational balls of 
$X^\#$ whose diameters do not exceed $2^{-s}$.  \label{c3}

    \item If $B \in \dom(h_s)$ and $B' \in \dom(h_{s+1})$ are such that $B \subseteq B'$, then 
    $h_s(B) \subseteq h_{s+1}(B')$.  \label{c4}
\end{enumerate}
To begin, we define $h_s$ as follows.  Using the computable compactness of $X^\#$, we search for 
rational open balls $B_1, \ldots, B_t$ of $X^\#$ that satisfy that following.
\begin{itemize}
    \item If $j \neq k$, then $\inf\{d(f,g)\ :\ f \in B_j\ \wedge\ g \in B_k\} > 0$. 

    \item $2^\omega = \bigcup_j B_j$.

    \item The formal diameter of $B_j$ is smaller than $2^{-s}$.
\end{itemize}
We then choose pairwise incomparable $\sigma_1, \ldots, \sigma_t \in 2^{<\omega}$ so that 
$2^\omega = \bigcup_j [\sigma_j]$.  Set $h_0([\sigma_j]) = B_j$.

Now, suppose $h_s$ has been defined and satisfies conditions (\ref{c1}) - (\ref{c3}) above.  
Suppose $\dom(h_s) = \{[\sigma_1], \ldots, [\sigma_t]\}$, 
and set $B_j = h_s([\sigma_j])$.  Each $B_j$ is a clopen subset of $2^\omega$.  
Moreover, each $B_j$ is a c.e. closed set of $X^\#$.  For, a rational open ball $B$ of 
$X^\#$ intersects $B_j$ if and only if there is a rational open ball $B'$ of $X^\#$ so that 
$B'$ is formally included in $B$ and 
$\inf\{d(f,g)\ :\ f \in B'\ \wedge\ g \in B_k\} > 0$ when $k \neq j$.  
Thus, each $B_j$ is a computably compact set of $X^\#$.  Thus, fixing $j \in \{1, \ldots, t\}$
for the moment, we search for and find rational open balls $B_{j,1}, \ldots, B_{j, r_j}$ 
of $X^\#$ that satisfy the following.
\begin{itemize}
    \item $\inf\{d(f,g)\ :\ f \in B_{j,k}\ \wedge\ g \in B_{j'}\} > 0$ when $j \neq j'$.

    \item $B_j = \bigcup_k B_{j,k}$.

    \item The formal diameter of $B_{j,k}$ is smaller than $2^{-(s+1)}$.  

    \item $\inf\{d(f,g)\ :\ p \in B_{j,k}\ \wedge\ g \in B_{j,k'}\} > 0$ 
    when $k \neq k'$.
\end{itemize}
We then select pairwise incomparable $\sigma_{j,1}, \ldots, \sigma_{j,r_j} \in \sigma_j\uparrow$ so that 
$[\sigma_j] = \bigcup_k [\sigma_{j,k}]$.  We then set 
$h_s([\sigma_{j,k}]) = B_{j,k}$.  

By inspection, $(h_s)_{s \in \N}$ satisfies conditions (\ref{c1}) - (\ref{c4}) above. 
When $f \in 2^\omega$, by Cantor's Theorem 
$\bigcap\{h_s(B)\ :\ s\in \N\ \wedge\ f \in B \in \dom(h_s)\}$ consists of a single point; 
define $\phi(f)$ to be this point.  It follows that $\phi$ is a bijective computable map of 
$2^\omega$ to $X^\#$.  
\end{proof}

\begin{corollary}\label{cor:2.omega}
$C^*(2^\omega)$ is computably categorical.
\end{corollary}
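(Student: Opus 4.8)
The plan is to read off the statement directly from the characterization already established in Corollary~\ref{cor:comp.cat.main}. That corollary says that for a compact Polish space $X$, the algebra $C^*(X)$ is computably categorical if and only if any two computably compact presentations of $X$ are computably homeomorphic. So the only work is to check that the hypothesis ``$X$ is a compact Polish space'' holds for $X = 2^\omega$ and that the ``any two computably compact presentations are computably homeomorphic'' condition holds for $2^\omega$.

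First I would note that $2^\omega$, equipped with its standard metric $d_0$, is indeed a compact Polish space, so Corollary~\ref{cor:comp.cat.main} is applicable with $X = 2^\omega$. Next, I would observe that the required right-hand condition --- that any two computably compact presentations of $2^\omega$ are computably homeomorphic --- is exactly Proposition~\ref{prop:comp.cat.cantor}. Combining these two facts via Corollary~\ref{cor:comp.cat.main} immediately gives that $C^*(2^\omega)$ is computably categorical.

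Since both ingredients have already been proved, there is no genuine obstacle here; the corollary is a two-line deduction. The only point deserving a moment's attention is bookkeeping: confirming that the notion of ``computably homeomorphic'' used in Proposition~\ref{prop:comp.cat.cantor} is the same one appearing in the statement of Corollary~\ref{cor:comp.cat.main}, and that Cantor space as treated in Proposition~\ref{prop:comp.cat.cantor} is the compact Polish space to which Corollary~\ref{cor:comp.cat.main} is being applied. With that matched up, the proof consists simply of citing Corollary~\ref{cor:comp.cat.main} together with Proposition~\ref{prop:comp.cat.cantor}.
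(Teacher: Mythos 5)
Your deduction is correct and is exactly the argument the paper intends: the corollary follows immediately by applying Corollary~\ref{cor:comp.cat.main} to $X = 2^\omega$ together with Proposition~\ref{prop:comp.cat.cantor}.
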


By contrast, T. Thewmorakot has shown that the Banach space $C(2^\omega; \R)$ is not computably categorical 
\cite{Thewmorakot.2023}.

By inspection, all of the proofs in sections \ref{sec:prelim} and \ref{sec:proofs.main} relativize.
In addition, the proof of Theorem \ref{thm:comp.unit}, though not uniform, relativizes.
These observations lead us to the following.

\begin{theorem}\label{thm:deg.cat}
Let $\mathbf{d}$ be a Turing degree.
Then, for every compact Polish space $X$, the following are equivalent.
\begin{enumerate}
    \item $\mathbf{d}$ is the degree of categoricity of $C^*(X)$.  

    \item $\mathbf{d}$ is the least powerful Turing degree that can compute 
    a homeomorphism between any two computably compact presentations of $X$.
\end{enumerate}
\end{theorem}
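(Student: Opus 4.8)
The plan is to derive Theorem~\ref{thm:deg.cat} by relativizing the machinery of Sections~\ref{sec:prelim}--\ref{sec:proofs.main} and Corollary~\ref{cor:comp.cat.main} to an arbitrary oracle, then comparing the two notions of ``degree of categoricity'' that result. Fix a Turing degree $\mathbf{d}$ and a compact Polish space $X$. The key observation is that Corollary~\ref{cor:comp.cat.main} was obtained from Theorem~\ref{thm:banach.stone} and Proposition~\ref{prop:comp.comp.op}, both of which relativize verbatim: for any oracle $D$, a $D$-computable isometric isomorphism between two presentations of $C^*(X)$ gives rise to a $D$-computable homeomorphism between the associated presentations of $X$ (via the spatial realization), and conversely a $D$-computable homeomorphism of presentations of $X$ induces a $D$-computable isometric isomorphism of the induced presentations of $C^*(X)$. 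One also needs that Theorem~\ref{thm:pres.X}, Theorem~\ref{thm:pres.c*}, Lemma~\ref{lm:eval.1}, Theorem~\ref{thm:eval.2}, and Proposition~\ref{prop:induced.comp} all relativize; this is the content of the sentence ``all of the proofs in sections~\ref{sec:prelim} and~\ref{sec:proofs.main} relativize,'' together with the (non-uniform but relativizing) Theorem~\ref{thm:comp.unit}.

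First I would set up the correspondence between presentations. Given a presentation $C^*(X)^\#$, by the relativized Theorem~\ref{thm:pres.X} (applied with the halting-problem-type oracle it requires, which is $\le_T \mathbf{0}'$ and hence does not affect which $\mathbf{d}$ arises as a least degree once we allow arbitrary oracles) together with relativized Lemma~\ref{lm:eval.1}, there is a $D$-computably compact presentation $X^\#$ of $X$ over which $C^*(X)^\#$ is $D$-evaluative, for $D$ of sufficiently high degree; conversely, by the relativized Theorem~\ref{thm:pres.c*}, every $D$-computably compact presentation of $X$ induces a $D$-computable presentation of $C^*(X)$. Moreover, by the relativized Proposition~\ref{prop:induced.comp} and Theorem~\ref{thm:eval.2}, an evaluative presentation of $C^*(X)$ over $X^\#$ is $D$-computably isometrically isomorphic to the presentation induced by $X^\#$. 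Thus, up to $D$-computable isometric isomorphism, computable presentations of $C^*(X)$ (relative to an oracle at least as powerful as the one in Theorem~\ref{thm:pres.X}) correspond exactly to computably compact presentations of $X$ (relative to that oracle), and this correspondence, together with its inverse, is effectively tracked by the spatial-realization and composition-operator constructions in a degree-preserving way.

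Second I would extract the equivalence of the two least-degree conditions. Suppose $\mathbf{d}$ is the degree of categoricity of $C^*(X)$. If $D \in \mathbf{d}$, then given any two computably compact presentations $X^\#_0, X^\#_1$ of $X$, pass to the induced computable presentations of $C^*(X)$; $D$ computes an isometric isomorphism between them (here one must check $D$ is powerful enough to see that these induced presentations are computable, which follows since the inducing construction is computable in the total boundedness functions of the $X^\#_i$, and those are computable — this is where a little care with the base oracle is needed), hence by relativized Theorem~\ref{thm:banach.stone} a homeomorphism between $X^\#_0$ and $X^\#_1$. So $\mathbf{d}$ computes homeomorphisms between any two computably compact presentations of $X$. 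For minimality, suppose $D' $ computes homeomorphisms between any two computably compact presentations of $X$; given two computable presentations $A^\#_0, A^\#_1$ of $C^*(X)$, relativize Theorem~\ref{thm:pres.X} (which costs at most $\mathbf{0}'$, so take the join $D' \oplus \mathbf{0}'$ — or, more carefully, note that for the purposes of identifying the \emph{least} such degree one argues inside the oracle cone and the $\mathbf{0}'$ overhead is absorbed because one is characterizing which $\mathbf{d}$ can occur, and Theorem~\ref{thm:pres.X}'s construction is actually computable given the name, per the uniformity remarks in Section~\ref{sec:unif}) to obtain computably compact presentations $X^\#_0, X^\#_1$ over which the $A^\#_i$ are evaluative; $D'$ computes a homeomorphism between them, and relativized Proposition~\ref{prop:comp.comp.op} plus relativized Proposition~\ref{prop:induced.comp} convert this into an isometric isomorphism between $A^\#_0$ and $A^\#_1$. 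Hence $D'$ computes isometric isomorphisms between any two computable presentations of $C^*(X)$, so $D'$ has degree $\ge \mathbf{d}$. The reverse implication (from (2) to (1)) is entirely symmetric, using the same two relativized propositions in the opposite direction.

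The main obstacle is bookkeeping the oracle overhead honestly. Theorem~\ref{thm:pres.X} is the one ingredient whose proof (in \cite{BEFGHMMT.2024+}) is not fully uniform/computable — by the discussion preceding Theorem~\ref{thm:unif.1}, the only non-uniformity is the dependence on Theorem~\ref{thm:comp.unit}, i.e., on a name for the unit. So the honest statement is: for \emph{any} oracle $D$ computing a name of $C^*(X)^\#$ together with a $C^*(X)^\#$-name of $\unit_{C^*(X)}$, the construction of $X^\#$ is $D$-computable and uniform. Since a name of the unit is a single real, it is computable from some oracle in every degree cone of interest, and in the ``least degree'' characterization one quantifies over all sufficiently powerful $D$; the unit-name overhead never increases the least degree because any $D$ witnessing condition (1) or (2) can be joined with a name of the unit without changing its degree only if that name is $D$-computable — which it is, once $D$ is the degree of categoricity of $C^*(X)$, by relativized Theorem~\ref{thm:comp.unit} (every $D$-computable presentation of the commutative unital algebra $C^*(X)$ has a $D$-computable unit). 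I would make this precise with a short lemma: if $\mathbf{d}$ is the degree of categoricity of $C^*(X)$, then $\unit_{C^*(X)}$ is $\mathbf{d}$-computable in any computable presentation, so Theorem~\ref{thm:pres.X} is $\mathbf{d}$-uniform; symmetrically, if $\mathbf{d}$ is the least degree computing homeomorphisms of computably compact presentations of $X$, one works with a fixed computably compact presentation and its genuinely computable total boundedness function, so no unit-name overhead arises on that side at all. With that lemma in hand the two minimality arguments go through cleanly and the theorem follows.
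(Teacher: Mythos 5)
Your proposal follows essentially the same route as the paper: Theorem \ref{thm:deg.cat} is obtained there simply by observing that the arguments of Sections \ref{sec:prelim} and \ref{sec:proofs.main} (together with Theorem \ref{thm:comp.unit}) relativize, and then running exactly the translation you describe --- relativized Theorem \ref{thm:banach.stone} turns a $D$-computable isometric isomorphism between the presentations of $C^*(X)$ induced by two computably compact presentations of $X$ into a $D$-computable homeomorphism, while Theorem \ref{thm:pres.X}, Lemma \ref{lm:eval.1}, and relativized Proposition \ref{prop:comp.comp.op} turn a $D$-computable homeomorphism back into a $D$-computable isometric isomorphism between arbitrary computable presentations of $C^*(X)$. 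This shows the two classes of oracles coincide, hence their least degrees coincide, which is the theorem.

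One point needs cleaning up, because your first fallback would actually break the proof. Non-uniformity is not the same as needing an oracle: by Theorem \ref{thm:comp.unit}, $\unit_{C^*(X)}$ is outright computable (not merely $\mathbf{d}$-computable) in every computable presentation of $C^*(X)$, so for a \emph{fixed} computable presentation the conclusion of Theorem \ref{thm:pres.X} holds with no oracle whatsoever: the presentation $X^\#$ it produces is genuinely computably compact. This matters precisely in your minimality step: if $X_0^\#, X_1^\#$ were only $(D' \oplus \mathbf{0}')$-computably compact, the hypothesis that $D'$ computes homeomorphisms between \emph{computably compact} presentations would not apply, and joining with $\mathbf{0}'$ would only show that $D' \oplus \mathbf{0}'$ computes the isometric isomorphisms, which does not yield $D' \geq \mathbf{d}$. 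So the ``halting-problem overhead'' and the ``absorption in the cone'' remarks should be deleted in favor of the unrelativized application of Theorems \ref{thm:comp.unit} and \ref{thm:pres.X} (equivalently, of Theorem \ref{thm:main.1}). Relatedly, your closing lemma assigns the unit issue to the wrong direction: the direction starting from the degree of categoricity never invokes Theorem \ref{thm:pres.X} (it uses Theorem \ref{thm:pres.c*} applied to given computably compact presentations of $X$), whereas the direction starting from a degree with property (2) is exactly the one that needs Theorem \ref{thm:pres.X}, and there the unrelativized Theorem \ref{thm:comp.unit} supplies the unit. With these corrections your argument is the paper's argument.
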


The following is a direct consequence of Theorem \ref{thm:banach.stone}.

\begin{corollary}\label{cor:comp.dim}
For every compact Polish space $X$, the following are equivalent.
\begin{enumerate}
    \item $C^*(X)$ has computable dimension $n$.

    \item There are, up to computable homeomorphism, exactly $n$ computable 
    presentations of $X$.
\end{enumerate}
\end{corollary}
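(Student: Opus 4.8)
The plan is to set up a bijection, at the level of equivalence classes, between the computable presentations of \(C^*(X)\) taken up to computable isometric isomorphism and the computably compact presentations of \(X\) taken up to computable homeomorphism; once this is in hand, (1) and (2) say the same thing, for every value of \(n\) (finite or \(\aleph_0\)). To build the bijection I would use the existence and uniqueness results of Section~\ref{sec:proofs.main}. To a computable presentation \(C^*(X)^\#\), Theorem~\ref{thm:main.1} attaches a presentation \(X^\#\) of \(X\) over which \(C^*(X)^\#\) is evaluative, and this \(X^\#\) is unique up to computable homeomorphism; by Theorem~\ref{thm:main.2} it is moreover computably compact. Conversely, to a computably compact presentation \(X^\#\), Theorem~\ref{thm:main.3} attaches a computable presentation of \(C^*(X)\) that is evaluative over \(X^\#\), unique up to computable isometric isomorphism. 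Write \(\Phi\) and \(\Psi\) for the resulting assignments on equivalence classes; the goal is to show they are well defined and mutually inverse.

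For the well-definedness of \(\Phi\) I would invoke Theorem~\ref{thm:banach.stone}: if \(C^*(X)^{\#_0}\) and \(C^*(X)^{\#_1}\) are computably isometrically isomorphic via \(T\) and each \(C^*(X)^{\#_i}\) is evaluative over a computably compact \(X^{\#_i}\), then the spatial realization of \(T\) is a computable homeomorphism of \(X^{\#_1}\) onto \(X^{\#_0}\) (the theorem applied with \(X_0 = X_1 = X\)). For the well-definedness of \(\Psi\) I would use Proposition~\ref{prop:comp.comp.op}: if \(X^{\#_0}\) and \(X^{\#_1}\) are computably homeomorphic via \(\psi\) and \(C^*(X)^{\#_i}\) is evaluative over \(X^{\#_i}\), then \(f \mapsto f\circ\psi\) is a computable map of \(C^*(X)^{\#_0}\) to \(C^*(X)^{\#_1}\); since \(\psi\) is a surjective homeomorphism this map is a bijective isometric \(*\)-homomorphism, and applying the proposition again to \(\psi^{-1}\) gives computability of its inverse. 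To see that \(\Phi\) and \(\Psi\) are mutually inverse I would appeal to the uniqueness clauses: starting from a computably compact \(X^\#\), the presentation \(\Psi(X^\#)\) is evaluative over \(X^\#\), so by the uniqueness in Theorem~\ref{thm:main.1} the presentation \(\Phi(\Psi(X^\#))\) is computably homeomorphic to \(X^\#\); and starting from a computable \(C^*(X)^\#\), the presentation \(\Phi(C^*(X)^\#)\) is a computably compact presentation over which \(C^*(X)^\#\) is evaluative, so by the uniqueness in Theorem~\ref{thm:main.3} the presentation \(\Psi(\Phi(C^*(X)^\#))\) is computably isometrically isomorphic to \(C^*(X)^\#\). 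Hence the two families of equivalence classes have the same cardinality, and (1)\(\Leftrightarrow\)(2) follows by counting.

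This is essentially the argument behind Corollary~\ref{cor:comp.cat.main} carried out one cardinal at a time rather than just at \(n=1\), so I do not expect a genuine obstacle. The only points that need care are the bookkeeping of which presentation is evaluative over which — so that the hypotheses of Theorems~\ref{thm:main.1}, \ref{thm:main.2}, \ref{thm:main.3}, Theorem~\ref{thm:banach.stone}, and Proposition~\ref{prop:comp.comp.op} genuinely apply under the identification \(X_0 = X_1 = X\) — together with the routine observation that precomposition with a homeomorphism of the base spaces is a bijective isometric \(*\)-homomorphism of the function algebras whose inverse is precomposition with the inverse homeomorphism.
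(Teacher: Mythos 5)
Your argument is correct and is essentially the paper's own (one-line) argument spelled out: the corollary is derived directly from Theorem \ref{thm:banach.stone} together with Proposition \ref{prop:comp.comp.op}, exactly as in Corollary \ref{cor:comp.cat.main}, i.e., the correspondence you construct between computable presentations of $C^*(X)$ up to computable isometric isomorphism and computably compact presentations of $X$ up to computable homeomorphism. Note only that your bijection counts \emph{computably compact} presentations of $X$ (as in Corollary \ref{cor:comp.cat.main} and Theorem \ref{thm:deg.cat}), which is evidently the intended reading of item (2), since by Theorem \ref{thm:main.2} the evaluative correspondence reaches only computably compact presentations of $X$.
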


\subsection{The classification problem for unital commutative $C^*$ algebras}

By Corollary 4.28 of \cite{Downey.Melnikov.2023}, we derive the following.

\begin{theorem}\label{thm:isom.index}
The set of all pairs $(e,e') \in \N^2$ so that $e$ and $e'$ index presentations of 
isomorphic unital commutative $C^*$ algebras is $\Sigma^1_1$-complete.
\end{theorem}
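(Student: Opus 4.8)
The plan is to transport the known classification of the homeomorphism relation on compact Polish spaces through Gelfand duality. Recall the following classical facts: every unital commutative $C^*$ algebra is isomorphic to $C^*(X)$ for some compact space $X$, which is Polish since the algebra is separable; a $*$-homomorphism of $C^*$ algebras is automatically norm-decreasing and an injective one is isometric, so ``isomorphic'' and ``isometrically isomorphic'' coincide for $C^*$ algebras; and $C^*(X) \cong C^*(Y)$ precisely when $X$ and $Y$ are homeomorphic. Thus, up to the correspondence $X \leftrightarrow C^*(X)$, the relation to be classified is the homeomorphism relation on compact Polish spaces. I would show the set is $\Sigma^1_1$ and then reduce to it a $\Sigma^1_1$-complete set, using Theorem \ref{thm:unif.2} and Corollary 4.28 of \cite{Downey.Melnikov.2023}.

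For the upper bound: the assertion that an index $e$ names a presentation of a $C^*$ algebra says that $\phi_e$ is total and enumerates a set which, for each rational $*$-polynomial $\ratpoly_j$, pins down a real via its enumerated lower and upper bounds, these reals satisfying (wherever applicable) the triangle inequality, submultiplicativity, the $C^*$ identity $\norm{\ratpoly^*\ratpoly}=\norm{\ratpoly}^2$, and homogeneity; this is an arithmetic condition on $e$. Commutativity, namely $\norm{\ratpoly_i\ratpoly_j-\ratpoly_j\ratpoly_i}=0$ for all $i,j$, is again arithmetic. That the presented algebra is unital says there is a vector $u$ --- a point of the completion, hence coded by a fast Cauchy sequence of rational vectors, i.e.\ by a real --- with $ux=xu=x$ for every distinguished vector $x$, which is $\Sigma^1_1$. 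Finally, that the algebras named by $e$ and $e'$ are isomorphic says there is an isometric $*$-isomorphism between them; such a map is determined by its restriction to the distinguished vectors and so is coded by a real, while the requirements that this data define a well-defined $\C$-linear, multiplicative, $*$-preserving, norm-preserving map with dense range are arithmetic in the two diagrams. The target set is the conjunction of these conditions on $e$ and $e'$, hence $\Sigma^1_1$.

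For hardness: by Corollary 4.28 of \cite{Downey.Melnikov.2023} the homeomorphism relation on computably compact presentations is $\Sigma^1_1$-complete, so there is a computable map $n\mapsto(a_n,a'_n)$ reducing a fixed $\Sigma^1_1$-complete set $U\subseteq\N$ to it; as is standard for such reductions, we may take each $a_n,a'_n$ to be an honest index of a computably compact presentation of a compact Polish space $X_{a_n}$ (resp.\ $X_{a'_n}$), with $n\in U$ if and only if $X_{a_n}$ and $X_{a'_n}$ are homeomorphic. From an index of a computably compact presentation one uniformly computes both a name of that presentation and a total boundedness function for it, so Theorem \ref{thm:unif.2} yields a computable $g$ with $g(a)$ a name of the computable presentation of $C^*(X_a)$ induced by $X_a$ --- a presentation of a unital commutative $C^*$ algebra. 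By Gelfand duality, $X_{a_n}\cong X_{a'_n}$ if and only if $C^*(X_{a_n})\cong C^*(X_{a'_n})$. Hence $n\mapsto(g(a_n),g(a'_n))$ is a computable many-one reduction of $U$ to the target set, which is therefore $\Sigma^1_1$-hard and, with the previous paragraph, $\Sigma^1_1$-complete.

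The substance of the argument lies entirely in Theorem \ref{thm:unif.2} (which makes $g$ computable) and in classical Gelfand duality (which makes the correspondence faithful); the only place calling for attention is the complexity count of the first step --- confirming that ``names a presentation of a $C^*$ algebra'' and ``is unital'' stay within $\Sigma^1_1$ --- together with the routine observation that the Downey--Melnikov reduction can be taken to output honest presentation indices, so that no separate totality argument for $g$ is needed.
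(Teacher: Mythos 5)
Your proposal is correct and follows the paper's intended route: the paper derives the theorem from Corollary 4.28 of \cite{Downey.Melnikov.2023} exactly by transferring the \(\Sigma^1_1\)-complete homeomorphism problem for computably compact Polish spaces through the uniform construction of Theorem \ref{thm:unif.2} together with classical Gelfand/Banach--Stone duality, which is precisely your hardness argument. Your additional verification of the \(\Sigma^1_1\) upper bound and of the complexity of ``indexes a presentation of a unital commutative \(C^*\) algebra'' is detail the paper leaves implicit, not a different method.
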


Theorem \ref{thm:isom.index} can be interpreted as saying that there is no classification 
of the separable unital commutative $C^*$ algebras.  While not surprising, it is 
nevertheless nice to have a rigorous proof.  In addition, we are unaware of any way to 
prove this fact except by the results in this paper.  

\section{Conclusion}\label{sec:concl}

Our results complement the effective Gelfand duality proven in \cite{BEFGHMMT.2024+} in that 
we can now reduce questions about the computable isomorphism of presented unital commutative $C^*$ algebras
to questions about computable homeomorphisms of presented compact Polish spaces.  
Our results depend considerably on the results in \cite{BEFGHMMT.2024+}, 
and in turn the results in \cite{BEFGHMMT.2024+} depend crucially on the availability of computable
multiplication and involution in computable presentations of $C^*(X)$.  Is the computability of these
operations necessary?  For example, if $C(X)^\#$ is a computable presentation of the Banach space
$C(X)$, does it follow that there is a presentation of $X$ over which $C(X)^\#$ is evaluative?
We conjecture the answer is `no'.  As evidence, we cite the recently announced result of Melnikov and Ng
that there is a compact Polish space $X$ so that $C^*(X)$ has a computable presentation but 
$X$ does not have a computably compact presentation.

\section{Acknowledgement}

I thank Peter Burton and Kostya Slutsky for helpful conversations.

\bibliographystyle{amsplain}
\bibliography{thisbib}

\end{document}